\documentclass[11pt]{amsart}
\usepackage{amssymb}
\usepackage{times}
\setlength\textwidth{36.3pc} \setlength\textheight{53pc}
\setlength\oddsidemargin{16pt} \setlength\evensidemargin{16pt}
\raggedbottom \relpenalty=10000 \binoppenalty=10000 \tolerance=500
\mathsurround=1pt

\newtheorem{theorem}{Theorem}[section]
\newtheorem{lemma}[theorem]{Lemma}

\newtheorem{corollary}[theorem]{Corollary}
\theoremstyle{definition}

\theoremstyle{remark}
\newtheorem{remark}[theorem]{Remark}
\begin{document}

\title{Deciding finiteness of matrix groups in positive characteristic}

\author{A. S. Detinko \and D. L. Flannery \and E. A. O'Brien}



\maketitle

\begin{abstract}
We present a new algorithm to decide finiteness of matrix groups
defined over a field of positive characteristic. Together with
previous work for groups in zero characteristic, this provides the
first complete solution of the finiteness problem for finitely
generated matrix groups over an arbitrary field. We also give an
algorithm to compute the order of a finite matrix group over a
function field of positive characteristic. Our implementations of
these algorithms are publicly available in {\sc Magma}.
\end{abstract}

\section{Introduction}

Deciding finiteness is a fundamental problem for any class of
potentially infinite groups. For matrix groups over a field of
zero characteristic, the algorithms of \cite{bbr,JSC4534} provide
a solution of this problem, and their implementations perform
satisfactorily for reasonably large input (\mbox{cf.}
\cite[Section 4]{JSC4534}). Deciding finiteness over a purely
transcendental extension $\mathbb{F}$ of a finite field was
considered by several authors \cite{Detinko01,ivanyos01,rtb99}.
The approach taken in \cite{rtb99} relies on the fact that a
subgroup $G$ of $\mathrm{GL}(n,\mathbb{F})$ is finite if and only
if, for every finite subfield $\mathbb{F}_q$ of $\mathbb{F}$, the
enveloping algebra $\langle G \rangle_{\mathbb{F}_q}$ is finite.
Since the dimension $\mathrm{dim}_{\mathbb{F}_q}\langle G
\rangle_{\mathbb{F}_q}$ of this algebra may depend exponentially
on $n$ (see \cite[Theorem 3.3]{rtb99}), this leads to
exponential-time algorithms. The polynomial-time algorithms of
\cite{Detinko01,ivanyos01} involve significant computing over
function fields, and so we expect that they are practical only for
small input. We know of no implementations of the algorithms of
\cite{Detinko01,ivanyos01,rtb99}.

A uniform approach to deciding finiteness of matrix groups over an
infinite field via congruence homomorphisms was proposed in
\cite[Section 4.3]{Draft}, and applied to nilpotent groups. We
implemented this approach, for rational nilpotent groups, in the
computer algebra systems {\sc Magma} \cite{Magma} and {\sf GAP}
(see the `Nilmat' package \cite{nilmat}). Its performance is
usually much better than existing procedures in {\sf GAP} and {\sc
Magma}.

The idea of using congruence homomorphisms to decide finiteness of
matrix groups was further developed in \cite{JSC4534}, for groups
over a function field of zero characteristic. In this paper we
extend the ideas of \cite{JSC4534} to positive characteristic. As
in that earlier paper, our main method is the application of
congruence homomorphisms to enable a comparison of dimensions of
certain enveloping algebras. However, the finiteness problem in
positive characteristic is more complicated: a finite subgroup of
$\mathrm{GL}(n,\mathbb{F})$ need not be completely reducible, and
it can be unboundedly large. The opposite holds in characteristic
zero.

Despite these difficulties, we obtain a substantial improvement
upon the algorithms of \cite{Detinko01,ivanyos01,rtb99}. We avoid
their most inefficient step: computing a basis of the enveloping
algebra of the input group over a function field (see
Sections~\ref{prelandbackground} and \ref{algdecfinpos}). As in
\cite{JSC4534}, much of the computation takes place in the
coefficient field---which is finite here. Although the number of
(function and finite) field operations of our finiteness testing
algorithm is polynomial in certain parameters of the input, our
primary goal was to develop a {\it practical} algorithm. We have
implemented it in {\sc Magma} \cite{Magma} and demonstrate that it
performs well for a range of input.

We also give an algorithm to compute the order of a finite matrix
group $G$ over a function field of positive characteristic, based
on the same strategy used to decide finiteness. This algorithm
finds an isomorphic copy of $G$ over a finite field, which can be
used to derive additional information about $G$. In
Section~\ref{simplenilpotent} we present a simplified finiteness
test for nilpotent groups. Finally, in Section~\ref{experiment},
we report on the performance of our {\sc Magma} implementation of
these algorithms.

By elementary structure theory of finitely generated field
extensions, any finitely generated matrix group $G$ is defined
over a finite extension of a function field. As explained below,
we can construct an isomorphism of $G$ onto a group defined over
the function field, in larger degree. Thus the results of this
paper together with \cite{bbr, JSC4534} effectively allow us to
decide finiteness of a finitely generated matrix group over any
field (\mbox{cf.} also \cite[Section 3.2.2]{JSC4534}).

\section{Preliminaries and background}
\label{prelandbackground}

Let $\mathbb{F}$ be a field of characteristic $p>0$, and let
$G=\langle \mathcal{S} \rangle$, where $\mathcal{S} = \{ S_1,
\ldots , S_r\} \subseteq$ $\mathrm{GL}(n,\mathbb{F})$. We may
assume that $\mathbb{F}$ is a finite extension of a function field
$\mathbb{E}= \mathbb{F}_q(X_1,\ldots , X_m)$, where the $X_i$ are
algebraically independent indeterminates, and $\mathbb{F}_q$ is
the finite field of size $q$. Replacement of elements of
$\mathbb{F}$ by  matrices over $\mathbb{E}$ according to the
multiplication action of $\mathbb{F}$ on an $\mathbb{E}$-basis of
$\mathbb{F}$ defines an isomorphism of $G$ into $\mathrm{GL}(n
l,\mathbb{E})$, where $l=|\mathbb{F} :\mathbb{E}|$. So without
loss of generality, from now on $\mathbb{F} = \mathbb{F}_q (X_1,
\ldots, X_m)$, $m\geq 1$, and $q$ is a power of the prime $p$.

In fact $G$ is contained in $\mathrm{GL}(n,R)$ for a finitely
generated integral domain $R \subseteq \mathbb{F}$. We can take $R
= \frac{1}{f}\, \mathbb{F}_q[X_1, \ldots , X_m]$, where $f =
f(X_1, \ldots , X_m)$ is a common multiple of the denominators of
the non-zero entries of the $S_i$ and $S_{i}^{-1}$, $1\leq i \leq
r$. We say that $\alpha = (\alpha_1, \ldots , \alpha_m)$ is {\em
admissible} (or $\mathcal{S}$-{\em admissible}) if $f(\alpha) \neq
0$. Here the $\alpha_i$ are in the algebraic closure
$\overline{\mathbb{F}}_q$ of $\mathbb{F}_q$; note that
$\mathbb{F}_q$ need not contain $\alpha_i$ such that $\alpha$ is
admissible. For an admissible $\alpha$, let $\nu$ denote the
positive integer such that $\mathbb{F}_q(\alpha):=
\mathbb{F}_q(\alpha_1, \ldots , \alpha_m) = \mathbb{F}_{q^\nu}$.
Let $\varphi_\alpha$ be the ring homomorphism $R\rightarrow
\mathbb{F}_{q^\nu}$ whose kernel is generated by the monomials
$X_i-\alpha_i$, $1\leq i \leq m$. If necessary, we extend
$\varphi_\alpha$ to $\widehat{R} = \frac{1}{f}\,
\mathbb{F}_{q^\mu}[X_1, \ldots , X_m]$ for any $\mu \geq 1$ in the
obvious way. With a slight abuse of notation, the induced
congruence homomorphisms on $\mathrm{GL}(n,\widehat{R})$ and on
the full matrix algebra $\mathrm{Mat}(n,\widehat{R})$ will also be
denoted $\varphi_\alpha$. Evaluation of $\varphi_\alpha$ on a
subset $\mathcal{M}$ of $\mathrm{Mat}(n,\widehat{R})$ is simply
substitution of $\alpha_i$ for $X_i$ in the entries of the
elements of $\mathcal{M}$, $1\leq i \leq m$. We denote
$\varphi_\alpha(\mathcal{M})$ as $\mathcal{M}(\alpha)$.
\begin{lemma}
If $G$ is finite then the kernel of $\varphi_\alpha$ on $G$ is a
$p$-group.
\end{lemma}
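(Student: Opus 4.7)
\medskip

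\noindent\textbf{Proof proposal.} The plan is to reduce the claim to showing that any $p'$-torsion element of $\ker\varphi_\alpha$ is trivial. Given $g\in G$ with $\varphi_\alpha(g)=I_n$, write $\mathrm{ord}(g)=p^am$ with $\gcd(m,p)=1$ and set $h=g^{p^a}$. Then $h\in\ker\varphi_\alpha$ and $h^m=I_n$; it suffices to show $h=I_n$, as this forces $\mathrm{ord}(g)\mid p^a$ and hence $\ker\varphi_\alpha$ consists solely of $p$-elements.

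The first key observation I would exploit is that the eigenvalues of $h$, in an algebraic closure $\overline{\mathbb{F}}$ of $\mathbb{F}$, are roots of $x^m-1$; since $\gcd(m,p)=1$, this polynomial is separable and splits over $\overline{\mathbb{F}}_q\subseteq\overline{\mathbb{F}}$. Consequently the coefficients of the characteristic polynomial $\chi_h(x)$, being elementary symmetric functions of these eigenvalues, lie in $\overline{\mathbb{F}}_q\cap\mathbb{F}$. Because $\mathbb{F}=\mathbb{F}_q(X_1,\dots,X_m)$ is purely transcendental over $\mathbb{F}_q$, the field $\mathbb{F}_q$ is algebraically closed in $\mathbb{F}$, so this intersection is exactly $\mathbb{F}_q$, giving $\chi_h(x)\in\mathbb{F}_q[x]$. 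Now $\varphi_\alpha$ restricts to the identity on $\mathbb{F}_q\subseteq R$, so it fixes $\chi_h$ coefficient-wise; on the other hand, applying $\varphi_\alpha$ to the identity $\chi_h(x)=\det(xI-h)$ and using $h(\alpha)=I_n$ yields $\chi_h(x)=\det(xI-I_n)=(x-1)^n$. Thus every eigenvalue of $h$ equals $1$. Since $h$ is annihilated by the separable polynomial $x^m-1$, its minimal polynomial is separable and hence $h$ is diagonalizable over $\overline{\mathbb{F}}$; a diagonalizable matrix with only $1$ as eigenvalue must be $I_n$.

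The main delicate point is the passage $\chi_h(x)\in\mathbb{F}_q[x]$, which relies both on the separability of $x^m-1$ in characteristic $p$ and on the algebraic closedness of $\mathbb{F}_q$ inside the rational function field $\mathbb{F}$. Once this is in hand, the rest is bookkeeping: reduction modulo the maximal ideal cut out by $\alpha$ fixes the constant field, so $\chi_h$ is invariant under $\varphi_\alpha$, while simultaneously $\varphi_\alpha(\chi_h)$ is forced to be $(x-1)^n$. I anticipate no other subtleties, since the finiteness of $G$ is used only to ensure each $g\in\ker\varphi_\alpha$ has finite multiplicative order.
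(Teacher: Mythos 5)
Your proof is correct, but it takes a genuinely different route from the paper's. The paper disposes of this lemma essentially by citation: for one indeterminate it invokes Proposition 3.2 and Example 3.6 of \cite{Draft}, and for more indeterminates it writes $\varphi_\alpha$ as a composite of univariate congruence homomorphisms and observes that a composite of homomorphisms with $p$-group kernels has $p$-group kernel. You instead give a self-contained argument: the $p'$-part $h=g^{p^a}$ of a kernel element is annihilated by the separable polynomial $x^{m}-1$ (note that your $m$ clashes with the paper's $m$, the number of indeterminates --- worth renaming), so its eigenvalues are roots of unity lying in $\overline{\mathbb{F}}_q$, whence $\chi_h\in\mathbb{F}_q[x]$ because $\mathbb{F}_q$ is algebraically closed in the purely transcendental extension $\mathbb{F}$; since $\varphi_\alpha$ fixes $\mathbb{F}_q$ pointwise and sends $\chi_h$ to $\det(xI-h(\alpha))=(x-1)^n$, the semisimple element $h$ has all eigenvalues equal to $1$ and is therefore $I_n$. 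Every step checks out, including the final (implicit) appeal to the fact that a group all of whose elements have $p$-power order is a $p$-group, and the finiteness of $G$ is indeed used only to guarantee that kernel elements have finite order. What your approach buys is independence from \cite{Draft} and a uniform treatment of any number of indeterminates without decomposing $\varphi_\alpha$ into successive univariate evaluations; what the paper's approach buys is brevity and reuse of machinery it relies on elsewhere.
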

\begin{proof}
This holds for $m=1$ by \cite[Proposition 3.2 and Example
3.6]{Draft}. The result for $m>1$ follows readily: the kernel of a
composite of congruence homomorphisms, all of whose kernels are
$p$-groups, is a $p$-group.
\end{proof}
\begin{corollary}\label{weaker}
If $G$ is finite and completely reducible, then $\varphi_\alpha$
is an isomorphism from $G$ onto $\varphi_\alpha(G)$ for every
admissible $\alpha$.
\end{corollary}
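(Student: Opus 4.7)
The plan is to combine the preceding lemma with a standard fact about completely reducible groups in positive characteristic: namely, that such a group has no non-trivial normal $p$-subgroup. Since $\ker \varphi_\alpha \cap G$ is normal in $G$ and is a $p$-group by the lemma, once we establish this fact we are done.

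First I would note that if $G \leq \mathrm{GL}(n,\mathbb{F})$ is completely reducible, then $V = \mathbb{F}^n$ decomposes as $V = V_1 \oplus \cdots \oplus V_k$, where each $V_i$ is an irreducible $\mathbb{F}G$-submodule. Let $N$ be any normal $p$-subgroup of $G$. For each $i$, consider $V_i^N$, the space of vectors in $V_i$ fixed by $N$. Because $\mathrm{char}\,\mathbb{F} = p$ and $N$ is a finite $p$-group, $V_i^N \neq 0$: this is the classical fact that a $p$-group acting on a nonzero vector space over a field of characteristic $p$ has a nonzero fixed vector (orbit sizes are powers of $p$, so the number of fixed vectors is congruent modulo $p$ to the size of $V_i$, which is $0$). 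Normality of $N$ in $G$ implies that $V_i^N$ is $G$-invariant, since for $g \in G$, $n \in N$ and $v \in V_i^N$ one has $n(gv) = g(g^{-1} n g)v = gv$.

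The irreducibility of $V_i$ then forces $V_i^N = V_i$, so $N$ acts trivially on each $V_i$, hence on $V$; thus $N = 1$. Applying this to $N = \ker(\varphi_\alpha|_G)$, which is a $p$-group by the lemma and is visibly normal (in fact it equals the full kernel restricted to $G$), we conclude $\ker(\varphi_\alpha|_G) = 1$. Hence $\varphi_\alpha$ restricts to an injective, and therefore bijective, homomorphism onto its image $\varphi_\alpha(G) = G(\alpha)$.

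The main obstacle is really just the $O_p = 1$ observation for completely reducible groups in characteristic $p$; this is well known but worth stating explicitly since the rest of the argument is a one-line consequence of the preceding lemma. No further subtlety arises from $\alpha$ being defined only over $\overline{\mathbb{F}}_q$, because the argument about fixed vectors works over $\mathbb{F}$ itself and the kernel is computed inside $G$.
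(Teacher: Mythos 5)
Your proposal is correct and matches what the paper intends: the corollary is stated there without proof, as an immediate consequence of the preceding lemma combined with the standard fact that a completely reducible linear group in characteristic $p$ has no non-trivial normal $p$-subgroup, which is exactly the argument you supply (the paper also notes later that Lemma~\ref{stronger} gives an alternative route via radicals of enveloping algebras). One small repair is needed in your justification that $V_i^N \neq 0$: the orbit-counting argument requires $V_i$ to be a finite set, but $\mathbb{F} = \mathbb{F}_q(X_1,\ldots,X_m)$ is infinite, so you should instead apply the count to the finite $N$-invariant $\mathbb{F}_p$-subspace spanned by the $N$-orbit of a non-zero vector, or simply observe that a finite $p$-subgroup of $\mathrm{GL}(n,\mathbb{F})$ in characteristic $p$ is unipotent and hence has non-zero fixed space.
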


Let $\mathbb{L}/\mathbb{K}$ be a field extension, and suppose that
$\mathcal{T}$ is a finite subset of $\mathrm{GL}(n,\mathbb{L})$
such that the enveloping algebra $\langle \mathcal{T}
\rangle_\mathbb{K}$ is finite-dimensional as a $\mathbb{K}$-vector
space. We now describe a standard procedure that constructs a
basis of $\langle \mathcal{T} \rangle_\mathbb{K}$ consisting of
elements from the monoid generated by $\mathcal{T}$. (Since we use
the procedure to compute an enveloping algebra basis only over a
finite field, we assume that $\mathbb{L}$ is finite in the
description.)

\bigskip

\hspace*{-1.5mm} ${\tt BasisEnvAlgebra}(\mathcal{T}, \mathbb{K})$

\vspace*{1mm}

Input: $\mathcal{T} \subseteq \mathrm{GL}(n, \mathbb{L})$,
$\mathbb{L}$ a finite field, and $\mathbb{K}$ a subfield of
$\mathbb{L}$.

\vspace*{1mm}

Output: a basis of the enveloping algebra $\langle H
\rangle_\mathbb{K}$, where $H = \langle \mathcal{T}\rangle$.

\vspace*{1mm}

\begin{itemize}

\item[(I)] $\mathcal{A} := \{ I_n \}$.

\vspace*{1mm}

\item[(II)]  While there exist $A \in \mathcal{A}$ and $T \in
\mathcal{T}$ such that $A T \not \in \mathrm{span}_\mathbb{K}
(\mathcal{A})$ do
 $\mathcal{A}:= \mathcal{A} \cup \{ AT\}$.

 \vspace*{1mm}

\item[(III)] Return $\mathcal{A}$.
\end{itemize}

\bigskip

We now set up a convention. Suppose that $\mathcal{S}(\alpha)$
is duplicate-free. For $A(\alpha)\in \mathrm{Mat}(n,
\mathbb{F}_{q^\nu} )$ that is a word in the elements of
$\mathcal{S}(\alpha)$, we canonically define a pre-image $A$ of
$A(\alpha)$ in $\mathrm{GL}(n,\mathbb{F})$: if $A(\alpha) =
S_{i_1}(\alpha) \cdots S_{i_t}(\alpha)$ then $A= S_{i_1} \cdots
S_{i_t}$.

\begin{lemma}\label{lindepoverextend}
$B_1, \ldots , B_l \in \mathrm{Mat}(n,\mathbb{F})$ are
$\mathbb{F}_q$-linearly independent if and only if they are
$\mathbb{F}_{q^\mu}$-linearly independent.
\end{lemma}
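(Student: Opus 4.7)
The plan is to reduce the claim to a standard fact about solution spaces of linear systems being preserved under field extension. One direction is immediate, so the real content lies in showing that $\mathbb{F}_q$-linear independence propagates to $\mathbb{F}_{q^\mu}$-linear independence.

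First I would clear denominators: since each $B_j$ has entries in $\mathbb{F} = \mathbb{F}_q(X_1,\ldots,X_m)$, a single nonzero $h \in \mathbb{F}_q[X_1,\ldots,X_m]$ can be chosen so that $h B_1, \ldots, h B_l$ all lie in $\mathrm{Mat}(n,\mathbb{F}_q[X_1,\ldots,X_m])$. Scaling by a nonzero element of the base field does not affect linear independence over either $\mathbb{F}_q$ or $\mathbb{F}_{q^\mu}$, so we may assume the $B_j$ themselves have polynomial entries in $\mathbb{F}_q[X_1,\ldots,X_m]$.

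Next, I would expand each entry of each $B_j$ in the monomial basis of $\mathbb{F}_q[X_1,\ldots,X_m]$. Let $\mathcal{M}$ denote the finite set of monomials appearing in any entry of any $B_j$. Arranging these coefficients, each $B_j$ is encoded by a vector $v_j \in \mathbb{F}_q^{\,n^2 |\mathcal{M}|}$. A linear combination $\sum_j \lambda_j B_j$ (with $\lambda_j$ in any extension field of $\mathbb{F}_q$) vanishes as a matrix of polynomials if and only if $\sum_j \lambda_j v_j = 0$ in $\mathbb{F}_{q^\mu}^{\,n^2|\mathcal{M}|}$, since the monomials in $\mathcal{M}$ are $\mathbb{F}_{q^\mu}$-linearly independent (the extension $\mathbb{F}_{q^\mu}/\mathbb{F}_q$ is algebraic while $\mathbb{F}_q[X_1,\ldots,X_m]/\mathbb{F}_q$ is purely transcendental, so they remain linearly independent over $\mathbb{F}_{q^\mu}$).

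Thus the question reduces to: are the vectors $v_1,\ldots,v_l \in \mathbb{F}_q^N$ linearly dependent over $\mathbb{F}_q$ if and only if they are linearly dependent over $\mathbb{F}_{q^\mu}$? This is a standard fact, immediate from the rank formula: the rank of the matrix $[v_1 \mid \cdots \mid v_l]$ equals the size of its largest nonvanishing minor, which is a determinant in $\mathbb{F}_q$ and therefore unaffected by regarding the matrix over the extension $\mathbb{F}_{q^\mu}$. There is no genuine obstacle here; the only mild care needed is confirming that $\mathcal{M}$ remains $\mathbb{F}_{q^\mu}$-linearly independent after extension, which is the linear disjointness statement above.
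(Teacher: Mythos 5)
Your proof is correct, but it is organized differently from the paper's. The paper argues in one stroke: a nontrivial $\mathbb{F}_{q^\mu}$-dependence $\sum_{i} a_i B_i = 0_n$ is read as a solution of a homogeneous linear system whose coefficients (the entries of the $B_i$) lie in $\mathbb{F}$, and the authors conclude that the $a_i$ must lie in $\mathbb{F}\cap\mathbb{F}_{q^\mu}=\mathbb{F}_q$, which converts the dependence into an $\mathbb{F}_q$-dependence. You instead clear denominators, expand each matrix entry in the monomial basis of $\mathbb{F}_q[X_1,\ldots,X_m]$, and so encode each $B_j$ as a coordinate vector over $\mathbb{F}_q$ itself; the lemma then becomes invariance of the rank of an $\mathbb{F}_q$-matrix under the extension $\mathbb{F}_{q^\mu}/\mathbb{F}_q$, which you justify via minors. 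What your route buys is that it isolates and proves the point the paper compresses: that the monomials stay $\mathbb{F}_{q^\mu}$-linearly independent, i.e.\ that $\mathbb{F}_{q^\mu}$ and $\mathbb{F}$ are linearly disjoint over $\mathbb{F}_q$. Indeed, taken literally, the paper's step ``a solution of a system with coefficients in $\mathbb{F}$ has its coordinates in $\mathbb{F}$'' needs the supplementary (standard) observation that the solution space of such a system is spanned by vectors defined over $\mathbb{F}$; your reduction to a system genuinely over $\mathbb{F}_q$ sidesteps that issue. Two small points of wording in your write-up: the common denominator $h$ lies in $\mathbb{F}_q[X_1,\ldots,X_m]$, not in the ``base field'' $\mathbb{F}_q$ --- the scaling step is still valid because multiplication by a nonzero element of $\mathbb{F}_{q^\mu}(X_1,\ldots,X_m)$ is injective on matrices over that field --- and the linear independence of distinct monomials over $\mathbb{F}_{q^\mu}$ needs no disjointness argument, since they form a basis of $K[X_1,\ldots,X_m]$ over any coefficient field $K$ by definition.
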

\begin{proof}
The non-trivial $\mathbb{F}_{q^\mu}$-linear dependence
$\sum_{i=1}^l a_i B_i = 0_n$  between the $B_i$ yields a system of
equations with coefficients in $\mathbb{F}$. Since $(a_1, \ldots ,
a_l)$ is a solution of this system, $a_i \in \mathbb{F} \cap
\mathbb{F}_{q^\mu} = \mathbb{F}_{q}$ for all $i$. Thus, if the
$B_i$ are $\mathbb{F}_q$-linearly independent then they must be
$\mathbb{F}_{q^\mu}$-linearly independent. The other direction is
obvious.
\end{proof}
\begin{corollary}\label{sameoverfqandover}
If $G$ is finite, then $\mathrm{dim}_{\mathbb{F}_q} \langle G
\rangle_{\mathbb{F}_q} = \mathrm{dim}_{\mathbb{F}_{q^\mu}} \langle
G \rangle_{\mathbb{F}_{q^\mu}}$.
\end{corollary}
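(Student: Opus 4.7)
The plan is to exhibit an explicit basis transfer between the two enveloping algebras using Lemma \ref{lindepoverextend}. Observe first that both algebras lie naturally inside $\mathrm{Mat}(n,\mathbb{F}_{q^\mu}(X_1,\ldots,X_m))$, since $G\subseteq \mathrm{GL}(n,\mathbb{F})$ and $\mathbb{F}\subseteq \mathbb{F}_{q^\mu}(X_1,\ldots,X_m)$. Because $G$ is finite, the $\mathbb{F}_q$-enveloping algebra $\langle G\rangle_{\mathbb{F}_q}$ is finite-dimensional and spanned by products of elements of $G$; in particular one can select an $\mathbb{F}_q$-basis $B_1,\ldots,B_l$ of $\langle G\rangle_{\mathbb{F}_q}$ with each $B_i$ a product of generators of $G$, so in particular $B_i\in\mathrm{Mat}(n,\mathbb{F})$. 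Set $l=\dim_{\mathbb{F}_q}\langle G\rangle_{\mathbb{F}_q}$.

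Next, I would apply Lemma \ref{lindepoverextend} to conclude that $B_1,\ldots,B_l$ remain linearly independent over the extension $\mathbb{F}_{q^\mu}$. Thus $l\leq \dim_{\mathbb{F}_{q^\mu}}\langle G\rangle_{\mathbb{F}_{q^\mu}}$.

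For the reverse inequality, I would show that the same $B_i$ also span $\langle G\rangle_{\mathbb{F}_{q^\mu}}$ over $\mathbb{F}_{q^\mu}$. Any element of $\langle G\rangle_{\mathbb{F}_{q^\mu}}$ is an $\mathbb{F}_{q^\mu}$-linear combination of products of elements of $G$; each such product lies in $\langle G\rangle_{\mathbb{F}_q}=\mathrm{span}_{\mathbb{F}_q}(B_1,\ldots,B_l)\subseteq \mathrm{span}_{\mathbb{F}_{q^\mu}}(B_1,\ldots,B_l)$. Combining spanning with the independence already established, $B_1,\ldots,B_l$ is an $\mathbb{F}_{q^\mu}$-basis of $\langle G\rangle_{\mathbb{F}_{q^\mu}}$, giving equality of dimensions.

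There is no real obstacle here; this is a standard extension-of-scalars argument, and the only substantive input is Lemma \ref{lindepoverextend}, whose role is to guarantee that passage from $\mathbb{F}_q$ to $\mathbb{F}_{q^\mu}$ does not introduce new linear dependencies among matrices with entries in $\mathbb{F}$. The only minor point to be careful about is to choose the basis $B_i$ inside $\mathrm{Mat}(n,\mathbb{F})$ (rather than an abstract basis of the $\mathbb{F}_q$-algebra), so that the hypothesis of the lemma is met.
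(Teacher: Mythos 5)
Your proof is correct and follows essentially the same route as the paper: both use Lemma~\ref{lindepoverextend} to get $\dim_{\mathbb{F}_q}\langle G\rangle_{\mathbb{F}_q}\leq\dim_{\mathbb{F}_{q^\mu}}\langle G\rangle_{\mathbb{F}_{q^\mu}}$, and both reverse inequalities rest on the observation that each algebra is spanned by elements of the finite group $G$ itself. The only cosmetic difference is that the paper gets the reverse inequality by noting that an $\mathbb{F}_{q^\mu}$-basis of $\langle G\rangle_{\mathbb{F}_{q^\mu}}$ chosen from $G$ is $\mathbb{F}_q$-independent in $\langle G\rangle_{\mathbb{F}_q}$, whereas you show your $\mathbb{F}_q$-basis also spans over $\mathbb{F}_{q^\mu}$.
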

\begin{proof}
By Lemma~\ref{lindepoverextend}, $\mathrm{dim}_{\mathbb{F}_q}
\langle G \rangle_{\mathbb{F}_q} \leq
\mathrm{dim}_{\mathbb{F}_{q^\mu}} \langle G
\rangle_{\mathbb{F}_{q^\mu}}$. Conversely, $\langle G
\rangle_{\mathbb{F}_{q^\mu}}$ has a basis consisting of elements
of $G$; that basis is therefore an $\mathbb{F}_{q}$-linearly
independent subset of $ \langle G \rangle_{\mathbb{F}_q}$. Hence
$\mathrm{dim}_{\mathbb{F}_{q^\mu}} \langle G
\rangle_{\mathbb{F}_{q^\mu}}\leq$ $\mathrm{dim}_{\mathbb{F}_q}
\langle G \rangle_{\mathbb{F}_q}$.
\end{proof}

We write $\widehat{\mathbb{F}}$ for
$\mathbb{F}_{q^\mu}(X_1,\ldots,X_m)$.
\begin{lemma}\label{kernelinradical}
If $G$ is finite then the kernel of $\varphi_\alpha$ on $\langle G
\rangle_{\mathbb{F}_{q^\mu}}$ is contained in the radical of
$\langle G \rangle_{\mathbb{F}_{q^\mu}}$ and the radical of
$\langle G \rangle_{\widehat{\mathbb{F}}}$.
\end{lemma}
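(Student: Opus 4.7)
The plan is to prove that every element of $K := \ker(\varphi_\alpha|_A)$ is quasi-regular in $A := \langle G\rangle_{\mathbb{F}_{q^\mu}}$, which, as $G$ is finite, is a finite-dimensional $\mathbb{F}_{q^\mu}$-algebra. For a two-sided ideal $I$ in such an algebra, $I \subseteq \mathrm{rad}(A)$ iff $1 - a$ is a unit in $A$ for every $a \in I$, so this implies the first inclusion.

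Fix $a \in K$. Since $a \in A \subseteq \mathrm{Mat}(n,\widehat{R})$, we have $\det(1-a) \in \widehat{R}$, and by multiplicativity $\varphi_\alpha(\det(1-a)) = \det(\varphi_\alpha(1-a)) = \det(I_n) = 1$. Hence $\det(1-a) \neq 0$ in $\mathbb{F}$, so $1-a$ is invertible in $\mathrm{Mat}(n,\mathbb{F})$; in particular $1-a$ is neither a left nor a right zero-divisor in $A$. In a finite-dimensional algebra any non-zero-divisor is a unit (the left multiplication map $L_{1-a}\colon A\to A$ is then injective and hence surjective, and similarly on the right), so $(1-a)^{-1} \in A$. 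As $K$ is a two-sided ideal of $A$, this gives $K \subseteq \mathrm{rad}(A)$.

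For the inclusion in $\mathrm{rad}(\widehat{A})$ with $\widehat{A} := \langle G\rangle_{\widehat{\mathbb{F}}}$, I would exploit that $\mathrm{rad}(A)$ is nilpotent in the finite-dimensional algebra $A$, so $K^{s}=0$ for some $s$. Because $\widehat{A} = \widehat{\mathbb{F}}\cdot A$ and the scalars $\widehat{\mathbb{F}}$ are central in $\widehat{A}$, the two-sided ideal of $\widehat{A}$ generated by $K$ collapses to $\widehat{\mathbb{F}} K$, and $(\widehat{\mathbb{F}} K)^{s} = \widehat{\mathbb{F}} K^{s} = 0$. Thus this ideal is nilpotent in $\widehat{A}$ and therefore contained in $\mathrm{rad}(\widehat{A})$, yielding $K \subseteq \mathrm{rad}(\widehat{A})$.

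The principal step is the determinant argument showing that $1 - a$ is invertible in $\mathrm{Mat}(n,\mathbb{F})$ and hence a unit inside $A$; once this is established, everything else (passage from non-zero-divisor to unit in a finite-dimensional algebra, nilpotence of the Jacobson radical, centrality-based scalar extension to $\widehat{A}$) is standard.
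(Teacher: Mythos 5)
Your argument is correct. Note, though, that the paper does not actually supply a proof here: it simply states that the proofs of Proposition 3.2 and Corollary 3.3 of the cited reference (Detinko, \emph{LMS J.\ Comput.\ Math.}\ \textbf{4} (2001)) carry over. Your write-up therefore serves as a self-contained replacement for that citation, and every step checks out: $A=\langle G\rangle_{\mathbb{F}_{q^\mu}}$ lies in $\mathrm{Mat}(n,\widehat R)$ and contains $I_n$ (since $I_n\in G$), so $\det$ commutes with $\varphi_\alpha$ and $\varphi_\alpha(I_n-a)=I_n$ gives $\det(I_n-a)\neq 0$; injectivity of left and right multiplication by $I_n-a$ on the finite-dimensional algebra $A$ then upgrades invertibility in $\mathrm{Mat}(n,\mathbb{F})$ to invertibility in $A$, and the standard characterisation of the Jacobson radical as the largest (left) ideal of quasi-regular elements yields $K\subseteq\mathrm{rad}(A)$. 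For the second inclusion, your observation that the ideal of $\widehat A=\widehat{\mathbb{F}}\cdot A$ generated by $K$ is just $\widehat{\mathbb{F}}K$, which inherits nilpotency from $K\subseteq\mathrm{rad}(A)$, is exactly the right scalar-extension argument; finite-dimensionality of $A$ is what guarantees $\mathrm{rad}(A)$ is nilpotent, and that is where finiteness of $G$ enters, as it must. The only presentational quibble is that you could shorten the first part: since $\varphi_\alpha(I_n - xa)=I_n$ for every $x\in A$ and $a\in K$, the same determinant argument applies directly to $I_n-xa$, so you get membership in the Jacobson radical from its elementwise description without invoking that $K$ is an ideal --- but as written the ideal-based version is also fine.
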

\begin{proof}
The proofs of Proposition 3.2 and Corollary 3.3 in
\cite{Detinko01} carry over.
\end{proof}

\begin{lemma}\label{stronger}
If $G$ is completely reducible, then $G$ is finite if and only if
$\varphi_\alpha: \langle G \rangle_{\mathbb{F}_{q^\mu}}
\rightarrow \langle G(\alpha) \rangle_{\mathbb{F}_{q^\mu}}$ is an
isomorphism, for any $\mathcal{S}$-admissible $\alpha$ and $\mu
\geq 1$.
\end{lemma}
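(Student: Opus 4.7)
The plan is to prove the two implications of the biconditional separately, relying on Lemma~\ref{kernelinradical} for the forward direction and on the finiteness of the coefficient field for the reverse.

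For the ``only if'' direction, I would assume $G$ is finite and completely reducible. Surjectivity of $\varphi_\alpha$ onto $\langle G(\alpha)\rangle_{\mathbb{F}_{q^\mu}}$ is immediate from the definition, so the task reduces to showing that $\varphi_\alpha$ restricted to $\langle G\rangle_{\mathbb{F}_{q^\mu}}$ is injective. By Lemma~\ref{kernelinradical}, the kernel is contained in $\mathrm{rad}(\langle G\rangle_{\widehat{\mathbb{F}}})$, so it suffices to prove that this radical vanishes. First I would observe that $\widehat{\mathbb{F}}/\mathbb{F}$ is a separable extension, being obtained by adjoining $\mathbb{F}_{q^\mu}$, which is separable over the perfect constant field $\mathbb{F}_q$. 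Hence complete reducibility of $G$ on $\mathbb{F}^n$ transfers to complete reducibility on $\widehat{\mathbb{F}}^n$, by the standard fact that semisimplicity of a finite-dimensional module is preserved under separable base change. Since $G$ is finite, $\langle G\rangle_{\widehat{\mathbb{F}}}$ is finite-dimensional over $\widehat{\mathbb{F}}$ and acts faithfully and semisimply on $\widehat{\mathbb{F}}^n$; the density theorem then forces $\langle G\rangle_{\widehat{\mathbb{F}}}$ to be semisimple, so $\mathrm{rad}(\langle G\rangle_{\widehat{\mathbb{F}}}) = 0$ and injectivity follows.

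For the ``if'' direction, suppose $\varphi_\alpha$ is an isomorphism of enveloping algebras. The image $\langle G(\alpha)\rangle_{\mathbb{F}_{q^\mu}}$ is a subset of $\mathrm{Mat}(n,\mathbb{F}_{q^{\mathrm{lcm}(\mu,\nu)}})$, which is a finite set because its base field is finite. Hence $\langle G\rangle_{\mathbb{F}_{q^\mu}}$ is also finite, and since $G\subseteq\langle G\rangle_{\mathbb{F}_{q^\mu}}$, the group $G$ is finite. Note that complete reducibility plays no role in this direction.

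The principal obstacle is transferring complete reducibility across the base change $\mathbb{F}\subseteq\widehat{\mathbb{F}}$, which requires invoking the standard preservation of semisimplicity under separable extensions and verifying separability of $\widehat{\mathbb{F}}/\mathbb{F}$. An alternative route, avoiding the passage to $\widehat{\mathbb{F}}$, would be to use perfectness of $\mathbb{F}_{q^\mu}$ to descend semisimplicity from $\langle G\rangle_{\widehat{\mathbb{F}}}$ down to $\langle G\rangle_{\mathbb{F}_{q^\mu}}$, and then apply the first form of the radical bound in Lemma~\ref{kernelinradical}.
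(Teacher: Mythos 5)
Your proof is correct and follows essentially the same route as the paper: the forward direction transfers complete reducibility across the separable extension $\widehat{\mathbb{F}}/\mathbb{F}$ (the paper cites Huppert--Blackburn for this), concludes that the radical of $\langle G\rangle_{\widehat{\mathbb{F}}}$ vanishes, and then invokes Lemma~\ref{kernelinradical} to kill the kernel of $\varphi_\alpha$. You additionally write out the easy converse (the image lies in a matrix algebra over a finite field), which the paper leaves implicit.
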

\begin{proof}
If $G$ is finite then $G$ is completely reducible over the
extension field $\widehat{\mathbb{F}}$ of $\mathbb{F}$ (see
\mbox{e.g.} \cite[1.8, \mbox{p.} 12]{Huppert}), so the radical of
$\langle G \rangle_{\widehat{\mathbb{F}}}$ is zero.
Lemma~\ref{kernelinradical} now implies that $\ker \varphi_\alpha$
on $\langle G \rangle_{\mathbb{F}_{q^\mu}}$ is trivial.
\end{proof}
Note that Lemma~\ref{stronger} implies Corollary~\ref{weaker}.

\begin{lemma}\label{obviouslinalg}
The algebras $\langle G \rangle_{\mathbb{F}_{q^\mu}}$ and $\langle
G(\alpha) \rangle_{\mathbb{F}_{q^\mu}}$ are isomorphic if and only
if $$\mathrm{dim}_{\mathbb{F}_{q^\mu}} \langle
G\rangle_{\mathbb{F}_{q^\mu}} = \mathrm{dim}_{\mathbb{F}_{q^\mu}}
\langle G(\alpha) \rangle_{\mathbb{F}_{q^\mu}}.$$
\end{lemma}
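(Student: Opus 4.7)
The plan is to recognize this as a statement about a surjective algebra homomorphism between finite-dimensional vector spaces, where dimensional equality forces the homomorphism to be an isomorphism. The natural map to use is $\varphi_\alpha$ itself, extended $\mathbb{F}_{q^\mu}$-linearly to $\langle G \rangle_{\mathbb{F}_{q^\mu}}$.

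First I would verify that $\varphi_\alpha$ restricts to a surjective $\mathbb{F}_{q^\mu}$-algebra homomorphism $\langle G \rangle_{\mathbb{F}_{q^\mu}} \rightarrow \langle G(\alpha) \rangle_{\mathbb{F}_{q^\mu}}$. Surjectivity is immediate from the definitions: every element of $\langle G(\alpha) \rangle_{\mathbb{F}_{q^\mu}}$ is an $\mathbb{F}_{q^\mu}$-linear combination of elements of $G(\alpha)$, each of which is $\varphi_\alpha(g)$ for some $g\in G$, and $\varphi_\alpha$ is $\mathbb{F}_{q^\mu}$-linear on $\mathrm{Mat}(n,\widehat{R})$.

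Next I would observe that $\langle G(\alpha) \rangle_{\mathbb{F}_{q^\mu}}$ is automatically finite-dimensional, since it sits inside $\mathrm{Mat}(n,\mathbb{F}_{q^{\mathrm{lcm}(\mu,\nu)}})$. Hence the equality of dimensions asserted in the lemma can only hold when both algebras are finite-dimensional.

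For the forward direction, isomorphic $\mathbb{F}_{q^\mu}$-algebras are in particular isomorphic as $\mathbb{F}_{q^\mu}$-vector spaces, so they share the same dimension. For the converse, assuming both dimensions are finite and equal, the surjection $\varphi_\alpha$ between $\mathbb{F}_{q^\mu}$-vector spaces of equal finite dimension must be bijective; since it is already a ring homomorphism, it is an $\mathbb{F}_{q^\mu}$-algebra isomorphism. There is no real obstacle here: the content is the linear-algebraic dimension count, together with the trivial observation that the natural candidate map $\varphi_\alpha$ is always surjective onto the target enveloping algebra.
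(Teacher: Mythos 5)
Your proposal is correct and follows essentially the same route as the paper: the paper's one-line proof observes that $\varphi_\alpha$ sends a basis of $\langle G \rangle_{\mathbb{F}_{q^\mu}}$ to a spanning set of $\langle G(\alpha) \rangle_{\mathbb{F}_{q^\mu}}$ (i.e.\ is surjective) and then compares dimensions, exactly as you do. Your additional remarks on finite-dimensionality of the target are a harmless elaboration of the same argument.
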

\begin{proof}
A basis of $\langle G \rangle_{\mathbb{F}_{q^\mu}}$ maps under
$\varphi_\alpha$ to a spanning set of $\langle G (\alpha)
\rangle_{\mathbb{F}_{q^\mu}}$, which is a basis if and only if the
$\mathbb{F}_{q^\mu}$-dimensions of these two algebras are equal.
\end{proof}

\begin{corollary}\label{strongercor}
If $G$ is completely reducible, then $G$ is finite if and only if,
for every $\mathcal{S}$-admissible $\alpha$,
\[
\mathrm{dim}_{\mathbb{F}_{q^\mu}} \langle G
\rangle_{\mathbb{F}_{q^\mu}} =\mathrm{dim}_{\mathbb{F}_{q^\mu}}
\langle G(\alpha) \rangle_{\mathbb{F}_{q^\mu}} =
\mathrm{dim}_{\mathbb{F}_q} \langle G \rangle_{\mathbb{F}_q}
=\mathrm{dim}_{\mathbb{F}_q} \langle G(\alpha)
\rangle_{\mathbb{F}_q}.
\]
\end{corollary}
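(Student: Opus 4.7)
The plan is to prove the two directions of the equivalence separately, assembling the forward direction out of Lemma~\ref{stronger} and Corollary~\ref{sameoverfqandover}, and handling the reverse direction by an elementary finiteness-over-a-finite-field argument. To keep track of the four quantities, write
\[
d_\mu = \mathrm{dim}_{\mathbb{F}_{q^\mu}} \langle G \rangle_{\mathbb{F}_{q^\mu}}, \qquad d'_\mu = \mathrm{dim}_{\mathbb{F}_{q^\mu}} \langle G(\alpha) \rangle_{\mathbb{F}_{q^\mu}},
\]
so that the claim becomes $d_\mu = d'_\mu = d_1 = d'_1$ for every $\mathcal{S}$-admissible $\alpha$.

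For the ``only if'' direction I would assume $G$ finite. Applying Lemma~\ref{stronger} twice, once for the given $\mu$ and once for $\mu = 1$, yields $d_\mu = d'_\mu$ and $d_1 = d'_1$ respectively, each time because $\varphi_\alpha$ is an isomorphism of the two enveloping algebras involved. Corollary~\ref{sameoverfqandover} applied to $G$ then supplies $d_\mu = d_1$, and the three equalities combine by transitivity to give the full chain $d_\mu = d'_\mu = d_1 = d'_1$.

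For the ``if'' direction, complete reducibility is actually not needed. I would fix a single admissible $\alpha$, which exists because $f \neq 0$ has a non-zero evaluation somewhere in $\overline{\mathbb{F}}_q$. The image $G(\alpha)$ lies in the finite group $\mathrm{GL}(n,\mathbb{F}_{q^\nu})$, so $\langle G(\alpha) \rangle_{\mathbb{F}_q}$ has finite $\mathbb{F}_q$-dimension. The hypothesised equality $d_1 = d'_1$ then forces $\langle G \rangle_{\mathbb{F}_q}$ to be a finite-dimensional algebra over the finite field $\mathbb{F}_q$, hence finite as a set, and therefore $G \subseteq \langle G \rangle_{\mathbb{F}_q}$ is finite.

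I do not expect a real obstacle here: the substantive work (in particular the semisimplicity argument that makes $\varphi_\alpha$ injective on $\langle G \rangle_{\mathbb{F}_{q^\mu}}$) is already packaged inside Lemma~\ref{stronger}. The only points needing care are that Lemma~\ref{stronger} quantifies over arbitrary $\mu \geq 1$, so it legitimately covers both $d_\mu = d'_\mu$ and $d_1 = d'_1$, and that the complete reducibility assumption is invoked only on the ``only if'' side, while the ``if'' side uses nothing more than one instance of the $\mathbb{F}_q$-dimension equality.
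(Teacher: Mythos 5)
Your proof is correct and follows essentially the same route as the paper, which simply cites Corollary~\ref{sameoverfqandover}, Lemma~\ref{stronger} and Lemma~\ref{obviouslinalg}: your forward direction is exactly that combination, and your reverse direction merely inlines the (implicit) content of Lemma~\ref{obviouslinalg} together with the observation that a finite-dimensional $\mathbb{F}_q$-algebra containing $G$ is a finite set. Your remark that complete reducibility is not needed for the ``if'' direction is accurate but does not change the substance of the argument.
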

\begin{proof}
This follows from Corollary~\ref{sameoverfqandover},
Lemma~\ref{stronger} and Lemma~\ref{obviouslinalg}.
\end{proof}

\begin{lemma}
\label{twosix} If $A_1(\alpha) , \ldots , A_d(\alpha)$ are
$\mathbb{F}_{q^\mu}$-linearly independent, then $A_1 , \ldots ,
A_d$ are $\mathbb{F}_{q^\mu}$-linearly independent.
\end{lemma}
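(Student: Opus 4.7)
The plan is to argue by contraposition, using the congruence homomorphism $\varphi_\alpha$ in the natural way. Suppose $A_1, \ldots, A_d$ were $\mathbb{F}_{q^\mu}$-linearly dependent, so that there exist $c_1, \ldots, c_d \in \mathbb{F}_{q^\mu}$, not all zero, with $\sum_{i=1}^d c_i A_i = 0_n$. My goal is to push this relation down via $\varphi_\alpha$ and derive a contradiction with the hypothesis.

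First I would locate the relation in the right ring. By the convention preceding the lemma, each $A_i$ is a word in $\mathcal{S}$, hence lies in $\mathrm{Mat}(n,R) \subseteq \mathrm{Mat}(n,\widehat{R})$, where $\widehat{R} = \frac{1}{f}\,\mathbb{F}_{q^\mu}[X_1,\ldots,X_m]$. Since $\mathbb{F}_{q^\mu} \subseteq \widehat{R}$, the $\mathbb{F}_{q^\mu}$-linear combination $\sum c_i A_i$ makes sense as an element of $\mathrm{Mat}(n,\widehat{R})$, and the supposed dependence $\sum c_i A_i = 0_n$ is an identity in $\mathrm{Mat}(n,\widehat{R})$.

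Next I would apply the extended congruence homomorphism $\varphi_\alpha : \widehat{R} \to \mathbb{F}_{q^\nu \mu'}$ (for an appropriate $\mu'$ so that the image field contains $\mathbb{F}_{q^\mu}$ and the evaluations $\alpha_i$) entrywise to this identity. Because $\varphi_\alpha$ is $\mathbb{F}_{q^\mu}$-linear (it fixes the coefficient field $\mathbb{F}_{q^\mu}$, sending only the $X_i$ to $\alpha_i$), and because $\varphi_\alpha(A_i) = A_i(\alpha)$ by definition, we obtain $\sum_{i=1}^d c_i A_i(\alpha) = 0_n$ in $\mathrm{Mat}(n,\mathbb{F}_{q^\mu}(\alpha))$. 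The same relation then holds over any larger field, in particular over $\mathbb{F}_{q^\mu}$-spans where the $A_i(\alpha)$ live, contradicting the assumed $\mathbb{F}_{q^\mu}$-linear independence of $A_1(\alpha), \ldots, A_d(\alpha)$.

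There is no real obstacle here; the only point worth checking carefully is that $\varphi_\alpha$ is $\mathbb{F}_{q^\mu}$-linear on $\widehat{R}$, which is immediate from its description as substitution $X_i \mapsto \alpha_i$ with coefficients untouched. Thus the proof reduces to one line once the ring-theoretic setting is made explicit, and it is dual in spirit to Lemma~\ref{lindepoverextend}: there, linear independence persists under field extension; here, it lifts under the congruence homomorphism.
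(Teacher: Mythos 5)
Your proposal is correct and is exactly the paper's argument: the paper's proof is the single line ``Clear, since $\varphi_\alpha$ is $\mathbb{F}_{q^\mu}$-linear,'' i.e.\ the contrapositive you spell out, pushing a putative dependence relation through $\varphi_\alpha$. You have simply made explicit the ring-theoretic bookkeeping (working in $\mathrm{Mat}(n,\widehat{R})$ and extending the image field to contain $\mathbb{F}_{q^\mu}$) that the paper leaves implicit.
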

\begin{proof}
Clear, since $\varphi_\alpha$ is $\mathbb{F}_{q^\mu}$-linear.
\end{proof}

Now we state an algorithm to decide whether an enveloping algebra
$\langle G\rangle_{\mathbb{F}_{q^\mu}}$ and its congruence image
$\langle G(\alpha) \rangle_{\mathbb{F}_{q^\mu}}$ are isomorphic,
for admissible $\alpha$ and $\mu \geq 1$. This uses the same
approach as the algorithm ${\tt IsFiniteMatGroupFuncNF}$ of
\cite{JSC4534}.

\bigskip

\hspace*{-1.5mm} ${\tt
IsIsomorphismEnvAlgebras}(\mathcal{S},\alpha, \mu)$

\vspace*{1mm}

Input: a finite subset $\mathcal{S} = \{ S_1, \ldots , S_r\}$ of
$\mathrm{GL}(n, \mathbb{F})$, an $\mathcal{S}$-admissible
$\alpha$, a positive integer $\mu$.

\vspace*{1mm}

Output: `true' if $\varphi_\alpha$ acts on $\langle
G\rangle_{\mathbb{F}_{q^\mu}}$ as an isomorphism, where $G =
\langle \mathcal{S}\rangle$; `false' otherwise.

\vspace*{1mm}

\begin{itemize}

\item[(I)] If $\mathcal{S}(\alpha)$ has duplicates then return
`false'.

\vspace*{1mm}

\item[(II)]  Construct $\mathcal{A}(\alpha) = \{ A_1(\alpha) ,
\ldots , A_d(\alpha) \}:= {\tt
BasisEnvAlgebra}(\mathcal{S}(\alpha), \mathbb{F}_{q^\mu})$.

\vspace*{.3mm}

\noindent Let $\mathcal{A}$ be the set of canonical pre-images $\{
A_1, \ldots , A_d\}$.

\vspace*{1mm}

\item[(III)] For $A_i(\alpha)\in \mathcal{A}(\alpha)$ and
$S_j(\alpha)\in \mathcal{S}(\alpha)$

\hspace*{0.3cm} find $a_k\in \mathbb{F}_{q^\mu}$ such that
$A_i(\alpha) S_j(\alpha) = \sum_{k=1}^d a_k A_k(\alpha)$.

\vspace*{.3mm}

\hspace*{0.3cm} If $A_iS_j \neq \sum_{k=1}^d a_k A_k$, then return
`false'.

\vspace*{1mm}

\item[(IV)]  Return `true'.
\end{itemize}

\bigskip

If ${\tt IsIsomorphismEnvAlgebras}(\mathcal{S},\alpha, \mu)$
returns `true' then $G$ is finite, and the set $\mathcal{A}$ found
in step (II) is a basis of $\langle
\mathcal{S}\rangle_{\mathbb{F}_{q^\mu}}=\langle
G\rangle_{\mathbb{F}_{q^\mu}}$. (For $\mathcal{A}$ is a spanning
set by step (III), and it is linearly independent by
Lemma~\ref{twosix}.) Observe that we obtain this basis after a
calculation over a finite field, rather than over the function
field $\mathbb{F}$.

By Lemma~\ref{stronger}, the following algorithm decides
finiteness of a completely reducible subgroup of
$\mathrm{GL}(n,\mathbb{F})$.

\bigskip

\hspace*{-1.5mm} ${\tt IsFiniteCRMatGroupFuncFF}(\mathcal{S})$

\vspace*{1mm}

Input: a finite subset $\mathcal{S}$ of
$\mathrm{GL}(n,\mathbb{F})$ such that $G = \langle \mathcal{S}
\rangle$ is completely reducible.

\vspace*{1mm}

Output: `true' if $G$ is finite; `false' otherwise.

\vspace*{1mm}

\begin{itemize}

\item[(I)] Find an $\mathcal{S}$-admissible $\alpha$.

\vspace*{1mm}

\item[(II)] Return ${\tt
IsIsomorphismEnvAlgebras}(\mathcal{S},\alpha, \nu)$.
\end{itemize}

\bigskip

Corollary~\ref{strongercor} implies that we can also decide
finiteness of a completely reducible group $G$ by testing whether
$\varphi_\alpha$ acts as an isomorphism on $\langle
G\rangle_{\mathbb{F}_{q^\mu}}$, for given $\mu \geq 1$. However
$\mathrm{dim}_{\mathbb{F}_{q^\mu}} \langle G(\alpha)
\rangle_{\mathbb{F}_{q^\mu}}$ might be larger than
$\mathrm{dim}_{\mathbb{F}_{q^\nu}} \langle G(\alpha)
\rangle_{\mathbb{F}_{q^\nu}}$, which is bounded above by $n^2$.

Now suppose that $G$ is a (finitely generated, not necessarily
completely reducible) subgroup of $\mathrm{GL}(n,\mathbb{F})$, and
that we know $\alpha$ where $\varphi_\alpha$ is an isomorphism on
$\langle G\rangle_{\mathbb{F}_{q^\nu}}$ if $G$ is finite. We may
now decide finiteness of $G$ just as in ${\tt
IsFiniteCRMatGroupFuncFF}$, namely, by applying ${\tt
IsIsomorphismEnvAlgebras}$. Unfortunately, such $\alpha$ need not
exist. On the other hand, there always exist $\alpha$ such that
$\varphi_\alpha$ is an isomorphism on $\langle
G\rangle_{\mathbb{F}_q}$ if $G$ is finite. We consider these
issues again at the end of Section~\ref{algdecfinpos}.

\section{Deciding finiteness and computing orders in positive characteristic}
\label{algdecfinpos}

We now present a general algorithm to decide finiteness of a
finitely generated subgroup $G$ of $\mathrm{GL}(n,\mathbb{F})$.
The approach is similar to the finiteness testing algorithm of
\cite{Detinko01}, but avoids its most complicated step: computing
a basis of $\langle G \rangle_\mathbb{F}$ over $\mathbb{F}$. We
also outline a simple method to determine the order of a finite
subgroup of $\mathrm{GL}(n,\mathbb{F})$.

We continue with established notation: $\alpha$ is an
$\mathcal{S}$-admissible $m$-tuple of elements from
$\overline{\mathbb{F}}_q$ such that $\mathcal{S}(\alpha)$ is
duplicate-free, and $\mathcal{A}(\alpha)=\{ A_1(\alpha) , \ldots,
A_d(\alpha)\}$ is a basis of $\langle G(\alpha)
\rangle_{\mathbb{F}_q(\alpha)}$ computed via ${\tt
BasisEnvAlgebra}$, with canonical pre-image $\mathcal{A}=\{ A_1 ,
\ldots, A_d\}$. For $i$ and $j$ such that $A_i(\alpha)S_j(\alpha)
=\sum_{k=1}^d a_kA_k(\alpha)$, where $a_k \in \mathbb{F}_{q^\nu} =
\mathbb{F}_q(\alpha)$, define $D =$ $A_iS_j -\sum_{k=1}^d a_kA_k$.
We assume that $p$ does not divide $\nu$. For $a\in
\mathbb{F}_{q^\nu}$, denote the trace of $a$ over $\mathbb{F}_q$
by $\mathrm{tr}(a)$:
\[
\mathrm{tr}(a) = a + \sigma (a) + \cdots + \sigma^{\nu-1}(a),
\qquad \mathrm{Gal}(\mathbb{F}_{q^\nu}/\mathbb{F}_q) = \langle
\sigma \rangle.
\]
Observe that $D' := \nu A_iS_j -\sum_{k=1}^d \mathrm{tr}(a_k)A_k$
is in $\langle G \rangle_{\mathbb{F}}$.
\begin{lemma}\label{jdashinrad}
Let $D$ and $D'$ be as defined above. If $G$ is finite and $D\neq
0_n$, then $D'$ is a non-zero element of the radical $\Re$ of
$\langle G \rangle_\mathbb{F}$.
\end{lemma}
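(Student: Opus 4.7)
I would prove this in three stages: first show $D' \in \mathrm{rad}(\langle G \rangle_{\widehat{\mathbb{F}}})$ with $\widehat{\mathbb{F}} = \mathbb{F}_{q^\nu}(X_1,\ldots,X_m)$, then descend radical membership to $\langle G \rangle_\mathbb{F}$, and finally verify $D' \neq 0$ using the assumption $p \nmid \nu$.

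For the first stage, the defining relation for the $a_k$ says exactly that $\varphi_\alpha(D) = 0$. Since $D \in \langle G \rangle_{\mathbb{F}_{q^\nu}}$, Lemma~\ref{kernelinradical} (with $\mu = \nu$) places $D$ in the radical of $\langle G \rangle_{\widehat{\mathbb{F}}}$. The Frobenius $\sigma$ generating $\mathrm{Gal}(\mathbb{F}_{q^\nu}/\mathbb{F}_q)$ extends to an $\mathbb{F}$-algebra automorphism of $\widehat{\mathbb{F}}$ (acting on coefficients of rational functions, fixing the $X_i$), and hence entrywise to an automorphism of $\mathrm{Mat}(n,\widehat{\mathbb{F}})$ that fixes $\mathrm{Mat}(n,\mathbb{F})$ pointwise and stabilizes $\langle G \rangle_{\widehat{\mathbb{F}}}$. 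Such an automorphism preserves the Jacobson radical, so each $\sigma^t(D)$ lies in $\mathrm{rad}(\langle G \rangle_{\widehat{\mathbb{F}}})$. Because $\sigma$ fixes the entries of $A_iS_j$ and of each $A_k$, direct computation yields
\[
\sum_{t=0}^{\nu-1} \sigma^t(D) \;=\; \nu A_iS_j - \sum_{k=1}^d \mathrm{tr}(a_k)\, A_k \;=\; D',
\]
so $D' \in \mathrm{rad}(\langle G \rangle_{\widehat{\mathbb{F}}})$.

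For the descent, observe that $\nu$ and each $\mathrm{tr}(a_k)$ lie in $\mathbb{F}_q$, so $D' \in \langle G \rangle_\mathbb{F}$, which is a finite-dimensional $\mathbb{F}$-algebra because $G$ is finite. Let $I$ be the two-sided ideal of $\langle G \rangle_\mathbb{F}$ generated by $D'$; then $I \otimes_\mathbb{F} \widehat{\mathbb{F}}$ is the two-sided ideal of $\langle G \rangle_{\widehat{\mathbb{F}}}$ generated by $D'$, hence contained in $\mathrm{rad}(\langle G \rangle_{\widehat{\mathbb{F}}})$, which is nilpotent by finite-dimensionality. Thus $I^N \otimes_\mathbb{F} \widehat{\mathbb{F}} = (I \otimes \widehat{\mathbb{F}})^N = 0$ for some $N$, and faithful flatness of $\widehat{\mathbb{F}}/\mathbb{F}$ forces $I^N = 0$. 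So $I$ is a nilpotent two-sided ideal of $\langle G \rangle_\mathbb{F}$, and hence $D' \in I \subseteq \Re$.

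For non-vanishing, suppose $D' = 0$. Applying $\varphi_\alpha$ and combining with $A_i(\alpha)S_j(\alpha) = \sum_k a_k A_k(\alpha)$ gives
\[
\sum_{k=1}^d (\nu a_k - \mathrm{tr}(a_k))\, A_k(\alpha) = 0.
\]
Since the $A_k(\alpha)$ are $\mathbb{F}_{q^\nu}$-linearly independent, $\nu a_k = \mathrm{tr}(a_k)$ for every $k$, and $p \nmid \nu$ then forces $a_k = \mathrm{tr}(a_k)/\nu \in \mathbb{F}_q$. But then $\mathrm{tr}(a_k) = \nu a_k$, so $D' = \nu D$; combined with $D' = 0$ and the invertibility of $\nu$ in $\mathbb{F}$, this contradicts $D \neq 0$. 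I expect the main obstacle to be the radical-descent step: ensuring that the ideal generated by $D'$ in the smaller algebra is indeed nilpotent once one base-changes to $\widehat{\mathbb{F}}$. The Galois-averaging identity and the non-vanishing check are then routine.
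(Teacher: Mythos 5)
Your proof is correct and follows essentially the same route as the paper: the non-vanishing argument (apply $\varphi_\alpha$, use linear independence of the $A_k(\alpha)$ and $p\nmid\nu$ to force $D'=\nu D$) is the paper's argument in substance, and your radical-membership argument --- $D\in\ker\varphi_\alpha\subseteq\mathrm{rad}\,\langle G\rangle_{\widehat{\mathbb{F}}}$ by Lemma~\ref{kernelinradical}, Galois-averaging to obtain $D'=\sum_t\sigma^t(D)$ over $\mathbb{F}$, then descent of nilpotency --- is a correct, fully worked-out version of the step the paper delegates to the proof of \cite[Corollary 3.5]{Detinko01}.
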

\begin{proof}
If $D' = 0_n$ then $A_iS_j = \sum_{k=1}^d b_k A_k$ where $b_k =
\frac{1}{\nu} \mathrm{tr}(a_k) \in \mathbb{F}_q$. In fact
$A_i(\alpha)S_j(\alpha) =$ $\sum_{k=1}^d b_k A_k(\alpha)$ implies
that $b_k=a_k$ for all $k$. But this contradicts $D = A_iS_j
-\sum_{k=1}^d a_kA_k \neq 0_n$. Hence $D'$ is non-zero. We verify
that $D'\in \Re$ as in the proof of \cite[Corollary
3.5]{Detinko01}.
\end{proof}

\begin{lemma}\label{nullspaceisgmodule}
The nullspace of the radical $\Re$ of $\langle G
\rangle_\mathbb{F}$ is a non-zero $G$-module.
\end{lemma}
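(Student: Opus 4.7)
The plan is to work with the $\langle G\rangle_\mathbb{F}$-module $V = \mathbb{F}^n$ and the common kernel $W = \bigcap_{R\in\Re}\ker R$, verifying separately that $W$ is non-zero and that $W$ is stable under $G$. The crucial input is that $\langle G\rangle_\mathbb{F}$, being a subalgebra of $\mathrm{Mat}(n,\mathbb{F})$, is a finite-dimensional $\mathbb{F}$-algebra, so its Jacobson radical $\Re$ is nilpotent.

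For non-vanishing of $W$, I would take the least $k \geq 0$ with $\Re^k = 0$. If $k = 0$, then $\Re = 0$ and $W = V \neq 0$. Otherwise $\Re^{k-1}\neq 0$, so I pick a non-zero matrix $M\in \Re^{k-1}$; since $\Re M \subseteq \Re^k = 0$, every column of $M$ lies in $W$, and the fact that $M\neq 0$ as a matrix forces $W$ to contain a non-zero vector.

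For $G$-invariance, I would use that $\Re$ is a two-sided ideal of $\langle G\rangle_\mathbb{F}$, together with $G \subseteq \langle G\rangle_\mathbb{F}$. Given $v\in W$, $g\in G$, and $R\in\Re$, the product $Rg$ again lies in $\Re$, and therefore $R(gv) = (Rg)v = 0$; hence $gv\in W$.

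I expect no serious obstacle here: once the nilpotence of $\Re$ is invoked, both halves reduce to one-line linear algebra. The only point worth flagging is that the argument does not actually require $G$ to be finite---the nilpotence of $\Re$ follows purely from $\langle G\rangle_\mathbb{F}$ being finite-dimensional over $\mathbb{F}$, which holds automatically inside $\mathrm{Mat}(n,\mathbb{F})$.
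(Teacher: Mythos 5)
Your proof is correct and the $G$-invariance half is exactly the paper's argument: $\Re$ is a two-sided ideal, so $\Re(gu)=(\Re g)u\subseteq\Re u=0$. The paper's proof in fact only verifies invariance and leaves the non-vanishing of the nullspace implicit, so your additional step --- using nilpotency of the Jacobson radical of the finite-dimensional algebra $\langle G\rangle_{\mathbb{F}}$ to produce a non-zero $M$ with $\Re M=0$ --- is a welcome completion rather than a divergence, and your observation that finiteness of $G$ is not needed is also accurate.
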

\begin{proof}
For all $g\in G$ and $u$ in the nullspace $U$ of $\Re$, we have
$\Re gu= \Re u=0$, since $\Re$ is an ideal of $\langle G
\rangle_\mathbb{F}$. Thus $G U \subseteq U$ as required.
\end{proof}

So if $G$ is finite and $D \neq 0_n$, then the nullspace of $D'$
contains a non-trivial $G$-module. We compute such a submodule
using the following procedure.

\bigskip

\hspace*{-1.5mm} ${\tt ModuleViaNullspace}(\mathcal{S}, E)$

\vspace*{1mm}

Input:  a finite subset $\mathcal{S}$ of
$\mathrm{GL}(n,\mathbb{F})$, and $E\in
\mathrm{Mat}(n,\mathbb{F})$.

\vspace*{1mm}

Output: a $G$-module $U$ in the nullspace of $E$, for $G=\langle
\mathcal{S}\rangle$.

\vspace*{1mm}

\begin{itemize}

\item[(I)] $U:= \mathrm{Nullspace}(E)$.

\vspace*{1mm}

\item[(II)] While there exists $S_i \in \mathcal{S}$ such that
$U\cap S_iU \neq U$ do $U:= U\cap S_iU$.

\vspace*{1mm}

\item[(III)] Return $U$.
\end{itemize}

\bigskip

Since each pass through the while loop reduces the dimension of
$U$, ${\tt ModuleViaNullspace}$  terminates in at most $n$
iterations. If $E$ is a non-zero element of $\Re$ (for example, if
$G$ is finite and $E = D'$ for $D\neq 0_n$), then the output is a
proper non-zero $G$-submodule of the underlying space $V$.

Now we present our main algorithm for deciding finiteness. We use
the following notation. Let $U$ be a $G$-submodule of $V$ and
extend a basis of $U$ to a basis of $V$. Write $G$ with respect to
the latter basis in block triangular form; $\rho_U$ denotes the
projection homomorphism from $G$ onto the block diagonal group,
whose kernel is the unitriangular subgroup that fixes $U$ and
$V/U$ elementwise.

\bigskip

\hspace*{-1.5mm} ${\tt IsFiniteMatGroupFuncFF}(\mathcal{S})$

\vspace*{1mm}

Input: a finite subset $\mathcal{S}$ of $\mathrm{GL}(n,
\mathbb{F})$.

\vspace*{1mm}

Output: `true' if $G= \langle \mathcal{S} \rangle$ is finite;
`false' otherwise.

\vspace*{1mm}

\begin{itemize}

\item[(I)] Find an $\mathcal{S}$-admissible $\alpha$ such that $p$
does not divide $\nu = |\mathbb{F}_q(\alpha)/\mathbb{F}_q|$.

\vspace*{.3mm}

\noindent If $S_i(\alpha) = S_j(\alpha)$ for distinct $S_i, S_j
\in \mathcal{S}$, then set $E = S_i-S_j$ and go to (IV).

\vspace*{1mm}

\item[(II)]  $\mathcal{A}(\alpha) := {\tt
BasisEnvAlgebra}(\mathcal{S}(\alpha), \mathbb{F}_{q^\nu}) = \{
A_1(\alpha) , \ldots , A_d(\alpha) \}$.

\vspace*{.3mm}

\noindent  Let $\mathcal{A}$ be the canonical pre-image $\{ A_1,
\ldots , A_d\}$ of $\mathcal{A}(\alpha)$.

\vspace*{1mm}

\item[(III)]
 If there exist $A_i \in \mathcal{A}$ and $S_j\in
\mathcal{S}$ such that $A_i S_j \neq \sum_{k=1}^d a_k A_k$, where
$a_k\in \mathbb{F}_{q^\nu}$ and $A_i(\alpha) S(\alpha) =$
$\sum_{k=1}^d a_k A_k(\alpha)$, then set $E= \nu A_iS_j
-\sum_{k=1}^d \mathrm{tr}(a_k)A_k$;

\vspace*{.3mm}

\noindent else return `true'.

\vspace*{1mm}

\item[(IV)]  $U_1:= {\tt ModuleViaNullspace}(\mathcal{S}, E)$.

\vspace*{.3mm}

\noindent If $U_1 =\{ 0\}$ then return `false';

\vspace*{.3mm}

\noindent else let $\rho= \rho_{U_1}$, $U_2=V/U_1$, \\
\hspace*{0.5cm}
for $k=1,2$ do \\
\hspace*{1.0cm} $\mathcal{A} := \{ \rho(A_1)\! \mid_{U_k} , \ldots
, \rho(A_d)\! \mid_{U_k}\}$, $\mathcal{S} := \{ \rho(S_1)\!
\mid_{U_k} , \ldots , \rho(S_r)\! \mid_{U_k}\}$,
 go to (III).
\end{itemize}

\bigskip

At any stage of ${\tt IsFiniteMatGroupFuncFF}$, we test finiteness
of constituents $G|_U$ of $G$ in block triangular form.  In
looping back to step (III) from step (IV), the dimension of the
$G$-module $U$ strictly reduces. Thus, eventually the algorithm
finds either that all constituents are finite, or that one of them
is infinite. In the former case $G$ has a finite homomorphic image
whose kernel is a finitely generated unipotent subgroup of
$\mathrm{GL}(n,\mathbb{F})$, and so is also finite; in the latter
case $G$ is infinite.

The maximum number of iterations of ${\tt IsFiniteMatGroupFuncFF}$
is $2n$, and its main component ${\tt BasisEnvAlgebra}$ has cost
$O(rn^8)$ finite field operations. The principal difference
between ${\tt IsFiniteMatGroupFuncFF}$ and the simpler alternative
${\tt IsFiniteCRMatGroupFuncFF}$ for completely reducible input is
that the former calls ${\tt ModuleViaNullspace}$. The operations
carried out over the function field are matrix addition, matrix
multiplication, and nullspace and intersection of subspaces. All
use $O(n^k)$ field operations where $k \leq 3$. For just one
indeterminate, admissible $\alpha$ always exist in
$\mathbb{F}_{q^{d+1}}$ where $d$ is the largest degree of
denominators in entries of the matrices in $\mathcal{S}$; a
similar estimate holds for $m>1$. In practice, admissible $\alpha$
may be found over a smaller finite field, even the prime subfield.

We turn now to the problem of determining the order of a finite
subgroup of $\mathrm{GL}(n,\mathbb{F})$. Below we give a simple
procedure to solve this problem, based on the next lemma.
\begin{lemma}\label{malcev}
Let $\mathcal{M}$ be a finite subset of
$\mathrm{Mat}(n,\mathbb{F})$. There are infinitely many admissible
$\alpha = (\alpha_1, \ldots , \alpha_m)$, $\alpha_i\in
\overline{\mathbb{F}}_q$, such that $|\mathcal{M}|=
|\mathcal{M}(\alpha)|$. If $m=1$ then $|\mathcal{M}| =
|\mathcal{M}(\alpha)|$ for all but finitely many admissible
$\alpha$.
\end{lemma}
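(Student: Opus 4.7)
The plan is to reduce the statement to a question about non-vanishing of a single nonzero polynomial over $\overline{\mathbb{F}}_q$ at infinitely many points. First observe that $\varphi_\alpha$ restricted to $\mathcal{M}$ is always a surjection onto $\mathcal{M}(\alpha)$, so $|\mathcal{M}(\alpha)| \leq |\mathcal{M}|$, with equality precisely when $\varphi_\alpha$ is injective on $\mathcal{M}$. Failure of injectivity at an admissible $\alpha$ means $(M_1 - M_2)(\alpha) = 0_n$ for some distinct $M_1, M_2 \in \mathcal{M}$. Clearing denominators in a nonzero entry of $M_1 - M_2$ yields a nonzero polynomial $g_{M_1,M_2} \in \mathbb{F}_q[X_1,\ldots,X_m]$ which must vanish at $\alpha$. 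Thus it suffices to find infinitely many $\alpha \in \overline{\mathbb{F}}_q^m$ at which the nonzero polynomial
\[
h := f \cdot \prod_{M_1 \neq M_2 \in \mathcal{M}} g_{M_1,M_2}
\]
is non-zero, since then $\alpha$ is admissible and $\varphi_\alpha$ is injective on $\mathcal{M}$.

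For $m = 1$, the polynomial $h \in \mathbb{F}_q[X_1]$ has only finitely many roots in $\overline{\mathbb{F}}_q$, so all but finitely many $\alpha \in \overline{\mathbb{F}}_q$ satisfy $h(\alpha) \neq 0$. This simultaneously proves both assertions in the case $m=1$.

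For $m > 1$, I would argue by induction on $m$ that any nonzero polynomial $h \in \mathbb{F}_q[X_1,\ldots,X_m]$ is non-vanishing at infinitely many points of $\overline{\mathbb{F}}_q^m$. Write $h = \sum_{i=0}^{N} h_i(X_1,\ldots,X_{m-1})\, X_m^i$ with $h_N \neq 0$. By the inductive hypothesis there are infinitely many $(\alpha_1,\ldots,\alpha_{m-1}) \in \overline{\mathbb{F}}_q^{m-1}$ with $h_N(\alpha_1,\ldots,\alpha_{m-1}) \neq 0$; for each such tuple, the univariate polynomial $h(\alpha_1,\ldots,\alpha_{m-1},X_m)$ is nonzero of degree $N$, hence has at most $N$ roots in $\overline{\mathbb{F}}_q$. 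Picking any $\alpha_m$ outside this finite set produces an $\alpha$ with $h(\alpha) \neq 0$, and varying over the infinitely many tuples yields infinitely many such $\alpha$.

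There is no real obstacle here; the proof is entirely elementary. The only point requiring a bit of care is the reduction step, namely the translation of the set-theoretic condition $|\mathcal{M}(\alpha)| = |\mathcal{M}|$ into the non-vanishing of a finite collection of polynomials extracted from the entries of the differences $M_1 - M_2$, together with the denominator $f$ encoding admissibility. Once that reduction is made, the $m=1$ case is immediate, and the $m \geq 2$ case follows by the standard inductive non-vanishing argument sketched above.
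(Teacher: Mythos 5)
Your proof is correct and follows essentially the same route as the paper: both reduce $|\mathcal{M}(\alpha)|=|\mathcal{M}|$ to the non-vanishing at $\alpha$ of a single product $h$ of polynomials extracted from the differences of distinct elements of $\mathcal{M}$, then invoke the fact that a non-zero (multivariate) polynomial over $\overline{\mathbb{F}}_q$ is non-zero at infinitely many points, with only finitely many zeros when $m=1$. The only difference is cosmetic: you spell out the inductive argument for the multivariate non-vanishing claim, which the paper asserts without proof.
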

\begin{proof}
Let $\mathcal{M}=\{ M_1, \ldots , M_k\}$. For each pair $i, j$,
where $i< j$, choose a position in which $M_i$ and $M_j$ have
different entries, and let $d_{ij}$ be the difference of the
entries. Denote by $h$ the product $\Pi_{1\leq i < j\leq k}\,
d_{ij}$ of all these differences. If $h(\alpha) \neq 0$ then
$|\mathcal{M}|=|\mathcal{M}(\alpha )|$. Since there are infinitely
many admissible $\alpha$ such that $h(\alpha) \neq 0$, and only
finitely many admissible $\alpha$ such that $h(\alpha) = 0$ if
$m=1$, the result follows.
\end{proof}
\begin{corollary}\label{infinitelymanyalpha}
Let $G\leq \mathrm{GL}(n,\mathbb{F})$ be finite. There are
infinitely many admissible $\alpha$ such that $|G| = |G(\alpha)|$
and $|\langle G\rangle_{\mathbb{F}_{q}}| = |\langle G(\alpha)
\rangle_{\mathbb{F}_{q}}|$. If $m=1$ then $|G| = |G(\alpha)|$ and
$|\langle G\rangle_{\mathbb{F}_{q}}| = |\langle G(\alpha)
\rangle_{\mathbb{F}_{q}}|$ for all but finitely many admissible
$\alpha$.
\end{corollary}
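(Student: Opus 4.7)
The plan is to apply Lemma \ref{malcev} directly, using the observation that once $G$ is finite, $\langle G\rangle_{\mathbb{F}_q}$ is also a \emph{finite} set of matrices: it is a finite-dimensional $\mathbb{F}_q$-vector space over the finite field $\mathbb{F}_q$, and its elements are $\mathbb{F}_q$-linear combinations of elements of $G\subseteq\mathrm{GL}(n,R)$, so they lie in $\mathrm{Mat}(n,R)$ and $\varphi_\alpha$ is defined on them for every admissible $\alpha$. So I would set $\mathcal{M}:=G\cup\langle G\rangle_{\mathbb{F}_q}$, a finite subset of $\mathrm{Mat}(n,\mathbb{F})$, and invoke Lemma \ref{malcev} to produce infinitely many admissible $\alpha$ (and, when $m=1$, all but finitely many) with $|\mathcal{M}|=|\mathcal{M}(\alpha)|$. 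For any such $\alpha$, the map $\varphi_\alpha$ is injective on $\mathcal{M}$, and in particular on each of $G$ and $\langle G\rangle_{\mathbb{F}_q}$; this immediately yields $|G|=|G(\alpha)|$.

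For the second equality, the remaining step is to identify $\varphi_\alpha(\langle G\rangle_{\mathbb{F}_q})$ with $\langle G(\alpha)\rangle_{\mathbb{F}_q}$. Since $\varphi_\alpha$ is an $\mathbb{F}_q$-algebra homomorphism, $\varphi_\alpha(\langle G\rangle_{\mathbb{F}_q})$ is an $\mathbb{F}_q$-subalgebra of $\mathrm{Mat}(n,\mathbb{F}_{q^\nu})$ containing $G(\alpha)$, hence contains $\langle G(\alpha)\rangle_{\mathbb{F}_q}$; the reverse inclusion is immediate since every $\mathbb{F}_q$-linear combination of elements of $G$ maps to an $\mathbb{F}_q$-linear combination of elements of $G(\alpha)$. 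Combining this identification with the injectivity of $\varphi_\alpha$ on $\langle G\rangle_{\mathbb{F}_q}$ yields $|\langle G\rangle_{\mathbb{F}_q}|=|\langle G(\alpha)\rangle_{\mathbb{F}_q}|$, as required.

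There is no real obstacle here: the corollary is a straightforward specialisation of Lemma \ref{malcev}. The only point that needs a moment's care is noting that $\langle G\rangle_{\mathbb{F}_q}$ is finite as a \emph{set} (which uses both the finiteness of $G$ and the finiteness of the coefficient field $\mathbb{F}_q$), since Lemma \ref{malcev} is stated for finite subsets of $\mathrm{Mat}(n,\mathbb{F})$.
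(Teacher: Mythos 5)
Your proposal is correct and is essentially the argument the paper intends: the corollary is stated without a separate proof precisely because it follows from Lemma~\ref{malcev} applied to a suitable finite set, exactly as you do by taking $\mathcal{M}=G\cup\langle G\rangle_{\mathbb{F}_q}$ and noting that injectivity of $\varphi_\alpha$ on $\mathcal{M}$ plus the fact that $\varphi_\alpha$ carries the $\mathbb{F}_q$-span of $G$ onto that of $G(\alpha)$ gives both equalities. Your explicit remarks that $\langle G\rangle_{\mathbb{F}_q}$ is a finite subset of $\mathrm{Mat}(n,R)$ and that $\varphi_\alpha(\langle G\rangle_{\mathbb{F}_q})=\langle G(\alpha)\rangle_{\mathbb{F}_q}$ are exactly the (routine) points the paper leaves implicit.
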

\begin{remark}\label{nogoodalpha}
It is not true that if $G$ is finite then there are infinitely
many admissible $\alpha$ such that $|\langle
G\rangle_{\mathbb{F}_{q^\nu}}| = |\langle G(\alpha)
\rangle_{\mathbb{F}_{q^\nu}}|$. Indeed
$\mathrm{dim}_{\mathbb{F}_{q^\nu}}\langle G(\alpha)
\rangle_{\mathbb{F}_{q^\nu}}$ may be less than
$\mathrm{dim}_{\mathbb{F}_q}\langle G(\alpha)
\rangle_{\mathbb{F}_q}$ for every admissible $\alpha$. For
example, consider the subgroup $G$ of $\mathrm{GL}(2,
\mathbb{F}_2(X))$ generated by $\renewcommand{\arraycolsep}{.1cm}
\tiny{\left( \begin{array}{cc} 1 & 1 \\
0 & 1
\end{array} \right)}$ and
$\renewcommand{\arraycolsep}{.1cm} \tiny{\left( \begin{array}{cc}
1 & X \\ 0 & 1
\end{array} \right)}$.
For all $\alpha \in \overline{\mathbb{F}}_2$ we have
$\mathrm{dim}_{\mathbb{F}_2(\alpha)}\langle G
\rangle_{\mathbb{F}_2(\alpha)} = 3$, whereas
$\mathrm{dim}_{\mathbb{F}_2(\alpha)}\langle G(\alpha)
\rangle_{\mathbb{F}_2(\alpha)} = 2$.
\end{remark}

Corollary~\ref{infinitelymanyalpha} implies that if $G$ is finite
and $m=1$, then there is a positive integer $\delta$ such that
$\varphi_\alpha$ is an isomorphism on $\langle
G\rangle_{\mathbb{F}_{q}}$ whenever $\alpha \in
\overline{\mathbb{F}}_q \setminus \mathbb{F}_{q^\delta}$. As such
$\delta$ may be impracticably large, 
our implementation
of the following algorithm uses the intrinsic random selection function 
in {\sc Magma}.

\bigskip

\hspace*{-1.5mm} ${\tt SizeFiniteMatGroupFuncFF}(\mathcal{S})$

\vspace*{1mm}

Input:  $\mathcal{S} \subseteq \mathrm{GL}(n, \mathbb{F})$ such
that $G = \langle \mathcal{S}\rangle$ is finite.

\vspace*{1mm}

Output: $|G|$.

\vspace*{1mm}

\begin{itemize}

\item[(I)] Randomly select an $\mathcal{S}$-admissible $\alpha \in
\overline{\mathbb{F}}_{q}^{\, (m)}$.

\vspace*{1mm}

\item[(II)] If ${\tt IsIsomorphismEnvAlgebras}(\mathcal{S},\alpha,
1) =$ `true' then return $|G(\alpha)|$;

\vspace*{.3mm}

\noindent else replace $\overline{\mathbb{F}}_{q}^{\, (m)}$ by
$\overline{\mathbb{F}}_{q}^{\, (m)}\setminus \{ \alpha\}$ and go
to (I).
\end{itemize}

\bigskip

We end this section with some comments on ${\tt
SizeFiniteMatGroupFuncFF}$. Recall that
$\mathrm{dim}_{\mathbb{F}_q}\langle G \rangle_{\mathbb{F}_q}$ may
depend exponentially on $n$. However, sometimes we can replace
$(\mathcal{S},\alpha, 1)$ by $(\mathcal{S},\alpha, \nu)$ in step
(II) above, thereby bringing the relevant dimension back to no
more than $n^2$. For instance, this is valid if $G$ is cyclic or
completely reducible. However, in general we cannot make this
modification (\mbox{cf.} Remark~\ref{nogoodalpha}).

Notice that ${\tt SizeFiniteMatGroupFuncFF}$ constructs an
isomorphic copy of $G \leq \mathrm{GL}(n,\mathbb{F})$ defined over
a finite field. We can use this copy and machinery for matrix
groups over finite fields to answer other questions about $G$.

\section{Deciding finiteness of nilpotent matrix groups}
\label{simplenilpotent}

In this section we develop a specialized algorithm to decide
finiteness of nilpotent subgroups of $\mathrm{GL}(n,\mathbb{F})$.
We remove the limitation of \cite[Section 4.3]{Draft} that the
ground field is perfect. Our algorithm represents an improvement
of the positive characteristic finiteness testing algorithm of
\cite{Draft}, including a more efficient transfer to the
completely reducible case. An important application is to decide
whether a single element $g$ of $\mathrm{GL}(n,\mathbb{F})$ 
has finite order.

For the rest of this section, $G\leq \mathrm{GL}(n,\mathbb{F})$ is
nilpotent. We let $g_s$ and $g_u$ denote respectively the
diagonalizable and unipotent parts of $g\in
\mathrm{GL}(n,\mathbb{F})$. Namely, $g_s$ and $g_u$ are the unique
matrices such that $g_s \in \mathrm{GL}(n,\overline{\mathbb{F}})$
is diagonalizable, $g_u \in$
$\mathrm{GL}(n,\overline{\mathbb{F}})$ is unipotent, and $g=
g_sg_u=g_ug_s$.
\begin{lemma}\label{iffiniteingroundfield}
If $g\in \mathrm{GL}(n,\mathbb{F})$ has finite order then $g_s$
and $g_u$ are both in $\langle g \rangle$.
\end{lemma}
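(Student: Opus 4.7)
The plan is to use the fact that, in positive characteristic, the semisimple (diagonalizable) and unipotent parts of a finite-order element commute and have coprime orders, one coprime to $p$ and the other a $p$-power; so each can be recovered as a suitable power of $g$.

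First I would verify that $g_s$ and $g_u$ individually have finite order. From $g^m = I$ (where $m$ is the order of $g$) and the commuting factorization $g=g_sg_u$, we get $g_s^m = g_u^{-m}$; the left side is semisimple and the right side unipotent, and the only matrix that is both is $I$, so $g_s^m = g_u^m = I$. Since $g_u - I$ is nilpotent, $g_u^{p^N} = I + (g_u-I)^{p^N} = I$ for large $N$, so $\mathrm{ord}(g_u)$ is a power of $p$, say $p^a$. On the other hand, the eigenvalues of $g_s$ in $\overline{\mathbb{F}}$ are roots of unity, and the only $p$-power root of unity in characteristic $p$ is $1$; hence $\mathrm{ord}(g_s)$ is coprime to $p$, say $k$. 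Because $g_s$ and $g_u$ commute, $m = \mathrm{lcm}(k,p^a) = kp^a$.

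Next I would exhibit $g_s$ and $g_u$ as powers of $g$. We have $g^{p^a} = g_s^{p^a} g_u^{p^a} = g_s^{p^a}$, and since $\gcd(p^a,k)=1$, some integer $c$ satisfies $p^a c \equiv 1 \pmod{k}$; then $(g^{p^a})^c = g_s^{p^a c} = g_s$, so $g_s \in \langle g \rangle$. Symmetrically, $g^k = g_s^k g_u^k = g_u^k$, and choosing $d$ with $kd \equiv 1 \pmod{p^a}$ gives $g_u = (g^k)^d \in \langle g \rangle$.

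No serious obstacle is expected: the only delicate point is pinning down that $\mathrm{ord}(g_s)$ is coprime to $p$ and $\mathrm{ord}(g_u)$ is a $p$-power (which uses characteristic $p$), after which the conclusion is a purely number-theoretic extraction via coprime exponents. The argument works uniformly over $\mathbb{F}$ because the Jordan--Chevalley parts $g_s, g_u$, while a priori defined over $\overline{\mathbb{F}}$, both lie in the ring $\mathbb{F}[g] \subseteq \mathrm{Mat}(n,\mathbb{F})$ as they are shown to be integer powers of $g$.
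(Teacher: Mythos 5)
Your proof is correct. The paper itself gives no argument here, simply citing \cite[Corollary 1, p.~135]{Segal}; your self-contained derivation --- that $\mathrm{ord}(g_u)$ is a $p$-power (via $(I+(g_u-I))^{p^N}=I+(g_u-I)^{p^N}$ in characteristic $p$), that $\mathrm{ord}(g_s)$ is coprime to $p$ (no nontrivial $p$-power roots of unity in characteristic $p$), and the extraction of $g_s$ and $g_u$ as the powers $g^{p^ac}$ and $g^{kd}$ with coprime exponents inverted modulo each other --- is exactly the standard argument underlying that reference, and every step checks out.
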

\begin{proof}
\mbox{Cf.} \cite[Corollary 1, \mbox{p.} 135]{Segal}.
\end{proof}

Define $G_s=\langle (S_1)_s, \ldots, (S_r)_s\rangle$ and
$G_u=\langle (S_1)_u, \ldots, (S_r)_u\rangle$. The next result
follows from part of \cite[Proposition 3, \mbox{pp.}
136-137]{Segal} (which does not require that the ground field be
perfect).
\begin{lemma}\label{segalsresults} \
\begin{itemize}
\item[{\rm (i)}] The maps defined by $g\mapsto g_s$ and $g\mapsto
g_u$ for $g\in G$ are homomorphisms; thus $G_s =$ $\{ g_s \mid
g\in G\}$ and $G_u = \{ g_u \mid g\in G\}$. \item[{\rm (ii)}]
$G\leq G_s\times G_u$.
\end{itemize}
\end{lemma}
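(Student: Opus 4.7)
The plan is to pass to the algebraic closure and exploit the structure of nilpotent linear algebraic groups. Let $H$ denote the Zariski closure of $G$ inside $\mathrm{GL}(n,\overline{\mathbb{F}})$. Because the identities expressing nilpotency of class $c$ are polynomial in matrix entries, $H$ remains nilpotent of class at most $c$ as an algebraic group; in particular $g_s,g_u\in H$ for every $g\in G$, since a Zariski-closed subgroup of $\mathrm{GL}(n,\overline{\mathbb{F}})$ is closed under Jordan decomposition.

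The key structural input I would use is a direct product decomposition $H = H^{\mathrm{ss}}\times H^{\mathrm{u}}$, where $H^{\mathrm{u}}$ is the closed, connected, normal subgroup of unipotent elements of $H$, $H^{\mathrm{ss}}$ is a closed subgroup consisting of semisimple elements, and the two subgroups centralize each other elementwise. Granting this, part (i) follows quickly. For $g,h\in G$ one has $g_s,h_s\in H^{\mathrm{ss}}$ and $g_u,h_u\in H^{\mathrm{u}}$, so $g_sh_s$ is semisimple, $g_uh_u$ is unipotent, they commute with one another, and (since $g_u$ centralizes $h_s$) $gh = g_sg_uh_sh_u = (g_sh_s)(g_uh_u)$. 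Uniqueness of the Jordan decomposition then forces $(gh)_s = g_sh_s$ and $(gh)_u = g_uh_u$, so $g\mapsto g_s$ and $g\mapsto g_u$ are homomorphisms on $G$. Their images are subgroups generated by the semisimple (respectively unipotent) parts of the generators of $G$, hence equal to $G_s$ and $G_u$; this yields $G_s = \{g_s:g\in G\}$ and $G_u = \{g_u:g\in G\}$. Part (ii) is then immediate: the homomorphism $g\mapsto (g_s,g_u)$ embeds $G$ into $G_s\times G_u$, because $G_s$ centralizes $G_u$ and $g = g_sg_u$.

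The main obstacle is establishing the decomposition $H = H^{\mathrm{ss}}\times H^{\mathrm{u}}$ when $H$ is disconnected. For connected nilpotent $H$ this is the classical result $H = T\times H^{\mathrm{u}}$ with $T$ a maximal torus. For a general nilpotent linear algebraic group I would examine the finite nilpotent component group $H/H^0$, lift it through a system of semisimple coset representatives, and verify that the enlarged $H^{\mathrm{ss}}$ is still a subgroup that centralizes $H^{\mathrm{u}}$; nilpotency of $H$ is essential at this step, because it constrains the conjugation action of these representatives on $H^{\mathrm{u}}$ and forces the lifted cosets to commute with $T$ as well. Once this extension is in place, the internal direct product decomposition and hence the full lemma fall out.
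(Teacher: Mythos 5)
Your framework is fine up to a point: the Zariski closure $H$ of $G$ is indeed nilpotent of class at most that of $G$, closed subgroups of $\mathrm{GL}(n,\overline{\mathbb{F}})$ are closed under Jordan decomposition, and \emph{granting} a decomposition $H=H^{\mathrm{ss}}\times H^{\mathrm{u}}$ with $H^{\mathrm{ss}}$ a subgroup of semisimple elements centralizing the subgroup $H^{\mathrm{u}}$ of unipotent elements, parts (i) and (ii) do follow by uniqueness of the Jordan decomposition exactly as you say. The difficulty is that this decomposition \emph{is} the lemma, transported to $\overline{G}$: the whole content of the statement is that in a nilpotent linear group the semisimple parts multiply like a group and commute with the unipotent parts, and that is precisely the step you defer. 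The connected case is classical, but $\overline{G}$ is typically disconnected (for instance whenever $G$ is finite), and your proposed repair --- lift the component group $H/H^{0}$ ``through a system of semisimple coset representatives'' --- is not carried out and cannot work as stated: a coset of $H^{0}$ need not contain any semisimple element at all (take $H$ finite unipotent, so $H^{0}=1$ and every non-trivial coset is a single unipotent element). What must actually be shown is that the \emph{set} of semisimple elements of $H$ is closed under multiplication and centralizes the set of unipotent elements; this genuinely uses nilpotency (in $S_3$ over a field of characteristic $3$ the transpositions are semisimple but their pairwise products are unipotent $3$-cycles, so the claim fails for solvable non-nilpotent groups) and no argument for it appears in your proposal.

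For comparison, the paper does not reprove this either: it invokes \cite[Proposition 3, pp.~136--137]{Segal}, whose proof establishes exactly the missing closure and commutation properties by direct manipulation of Jordan decompositions inside a nilpotent linear group; the authors' only added remark is that Segal's argument does not require the ground field to be perfect. To make your route work you would need to supply a proof of $H=H^{\mathrm{ss}}\times H^{\mathrm{u}}$ valid for disconnected nilpotent algebraic groups (for example: show the maximal torus $T$ of $H^{0}$ is central in all of $H$ using nilpotency together with the finite-order action of $H/H^{0}$ on the character lattice of $T$, and then prove directly that products of semisimple elements remain semisimple), or else cite a reference that covers the non-connected case. As it stands, the proposal reduces the lemma to an unproved assertion of essentially the same depth.
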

\begin{lemma}\label{finiteiffgsis}
$G$ is finite if and only if $G_s$ is finite.
\end{lemma}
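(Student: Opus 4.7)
The plan is to prove the two directions separately, using Lemma~\ref{iffiniteingroundfield} for the forward implication and Lemma~\ref{segalsresults} together with the finiteness of $G_u$ for the converse.

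For the forward direction, suppose $G$ is finite. Then every $g\in G$ has finite order, so by Lemma~\ref{iffiniteingroundfield} we have $g_s\in\langle g\rangle\subseteq G$. Hence, using the description $G_s=\{g_s\mid g\in G\}$ from Lemma~\ref{segalsresults}(i), we conclude $G_s\subseteq G$, and therefore $G_s$ is finite.

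For the converse, assume $G_s$ is finite. By Lemma~\ref{segalsresults}(ii), $G\leq G_s\times G_u$, so it suffices to show that $G_u$ is finite. First, $G_u$ is finitely generated: by Lemma~\ref{segalsresults}(i) the map $g\mapsto g_u$ is a surjective homomorphism from $G$ onto $G_u$, and $G=\langle\mathcal{S}\rangle$, so $G_u=\langle (S_1)_u,\ldots,(S_r)_u\rangle$. Second, $G_u$ is nilpotent (being a homomorphic image of the nilpotent group $G$) and consists entirely of unipotent matrices. Third, over a field of characteristic $p$ every unipotent element of $\mathrm{GL}(n,\overline{\mathbb{F}})$ has order a power of $p$ bounded in terms of $n$: if $u=I+N$ with $N^n=0$, then $u^{p^k}=I+N^{p^k}=I$ as soon as $p^k\geq n$. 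Thus $G_u$ has bounded exponent.

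The main (and only non-routine) point is then the classical fact that a finitely generated nilpotent group of bounded exponent is finite; this can be invoked directly, or obtained by noting that $G_u$ is polycyclic and periodic. Combining this with the finiteness of $G_s$ gives $|G|\leq|G_s|\cdot|G_u|<\infty$, completing the proof. The only potential obstacle is a careful justification that finitely generated periodic nilpotent groups are finite, but this is standard and can simply be cited.
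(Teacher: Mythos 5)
Your proof is correct and follows essentially the same route as the paper: both directions rest on Lemma~\ref{segalsresults} (the decomposition $G\leq G_s\times G_u$ and the homomorphisms $g\mapsto g_s$, $g\mapsto g_u$) together with the finiteness of the finitely generated periodic group $G_u$. The only minor differences are that the paper deduces finiteness of $G_s$ directly as a homomorphic image of $G$ (rather than via $G_s\subseteq G$ from Lemma~\ref{iffiniteingroundfield}), and it invokes the general fact that a finitely generated periodic \emph{matrix} group is finite, where you give the slightly more self-contained argument via bounded exponent and nilpotency; both are sound.
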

\begin{proof}
By Lemma~\ref{segalsresults} (i), $G_s$ is finite if $G$ is
finite. As a finitely generated periodic matrix group, $G_u$ is
finite. Hence if $G_s$ is finite then $G$ is finite by
Lemma~\ref{segalsresults} (ii).
\end{proof}

Let $\gamma$ be the positive integer such that $p^{\gamma-1}<n\leq
p^\gamma$. By \cite[\mbox{p.} 192]{Suprunenko2}, $p^\gamma$ is the
maximum order of a unipotent element of
$\mathrm{GL}(n,\mathbb{F})$. Define $\mathcal{S}^{p^\gamma}= \{
S_i^{p^\gamma} \mid 1\leq i \leq r \}$ and $G^{p^\gamma} = \langle
\mathcal{S}^{p^\gamma} \rangle$.
\begin{lemma}\label{justifies} \
\begin{itemize}
\item[{\rm (i)}]
 $G$ is
finite if and only if $G^{p^\gamma}$ is finite. \item[{\rm (ii)}]
If $G$ is finite then $G^{p^\gamma}=G_s$ is completely reducible.
\end{itemize}
\end{lemma}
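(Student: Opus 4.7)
My plan hinges on the identity $S_i^{p^\gamma}=(S_i)_s^{p^\gamma}$ for each generator $S_i$. Since $(S_i)_s$ and $(S_i)_u$ commute, $S_i^{p^\gamma}=(S_i)_s^{p^\gamma}(S_i)_u^{p^\gamma}$, and the already-cited bound on orders of unipotent elements of $\mathrm{GL}(n,\mathbb{F})$ (which uses $n\leq p^\gamma$) forces $(S_i)_u^{p^\gamma}=I_n$. This immediately yields $G^{p^\gamma}=\langle(S_1)_s^{p^\gamma},\ldots,(S_r)_s^{p^\gamma}\rangle\leq G_s$, an inclusion I will use in every branch.

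For part (i) the forward direction is trivial from $G^{p^\gamma}\leq G$. For the converse, I assume $G^{p^\gamma}$ is finite and aim to show $G_s$ is finite, so that Lemma~\ref{finiteiffgsis} completes the argument. Each $(S_i)_s^{p^\gamma}=S_i^{p^\gamma}$ then has finite order, which in turn forces $(S_i)_s$ to have finite order. Now a finite-order semisimple matrix in characteristic $p$ has order coprime to $p$, because its eigenvalues lie in $\overline{\mathbb{F}}_q$ and every root of unity there has $p'$-order. Hence $\gcd(\mathrm{ord}((S_i)_s),p^\gamma)=1$, so $(S_i)_s\in\langle(S_i)_s^{p^\gamma}\rangle\subseteq G^{p^\gamma}$. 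Combining over all $i$ yields $G_s\leq G^{p^\gamma}$, which is finite.

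For part (ii) I assume $G$ is finite. Lemma~\ref{iffiniteingroundfield} places each $(S_i)_s$ inside $G$, so it has finite order, and the argument of the previous paragraph now gives the equality $G_s=G^{p^\gamma}$. To obtain complete reducibility I invoke Lemma~\ref{segalsresults}(i): every element of $G_s$ equals $g_s$ for some $g\in G$, hence is semisimple of finite order and therefore of $p'$-order. Cauchy's theorem then forces $|G_s|$ to be coprime to $p$, and Maschke's theorem (applicable because $p\nmid|G_s|$) delivers the complete reducibility of $G_s\leq\mathrm{GL}(n,\mathbb{F})$.

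The one step that needs careful handling is the inclusion $G_s\leq G^{p^\gamma}$: its cleanness depends on the observation that the semisimple matrices in play all have order coprime to $p$, so that raising to the $p^\gamma$-th power loses no information on the semisimple side. Once that is recorded, both parts are essentially bookkeeping with Lemmas~\ref{iffiniteingroundfield}, \ref{segalsresults}, and \ref{finiteiffgsis} together with Maschke's theorem.
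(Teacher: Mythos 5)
Your proof is correct and takes essentially the same route as the paper's: both hinge on $(S_i)_s$ having $p'$-order, so that $(S_i)_s\in\langle (S_i)_s^{p^\gamma}\rangle\leq G^{p^\gamma}$ (giving $G_s\leq G^{p^\gamma}$), combined with the reverse inclusion coming from $(S_i)_u^{p^\gamma}=I_n$ and an appeal to Lemma~\ref{finiteiffgsis}. The only differences are cosmetic: you state the identity $S_i^{p^\gamma}=(S_i)_s^{p^\gamma}$ explicitly rather than routing through Lemma~\ref{iffiniteingroundfield}, and you spell out the Cauchy/Maschke argument for complete reducibility, which the paper leaves implicit.
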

\begin{proof}
(i) Certainly $G^{p^\gamma} \leq G$ is finite if $G$ is finite.
Suppose that $G^{p^\gamma}$ is finite. Then each $S_i$ has finite
order, so $(S_i)_s$ has order coprime to $p$. Thus $(S_i)_s \in
\langle (S_i)_s^{p^\gamma} \rangle$. Since $(S_i)_s^{p^\gamma} \in
\langle S_i^{p^\gamma} \rangle$ by
Lemma~\ref{iffiniteingroundfield}, we have $G_s\leq G^{p^\gamma}$,
and so $G_s$ is finite. Lemma~\ref{finiteiffgsis} now completes
the proof of this item.

(ii) If $G$ is finite then $G_s\leq G^{p^\gamma}$. Further,
$G^{p^\gamma}\leq G_s$ since each generator of the nilpotent group
$G^{p^\gamma} \leq G$ has trivial unipotent part (by the choice of
$\gamma$).
\end{proof}

Lemma~\ref{justifies} justifies correctness of the following.

\bigskip

\hspace*{-1.5mm} ${\tt
IsFiniteNilpotentMatGroupFuncFF}(\mathcal{S})$

\vspace*{1mm}

Input: a finite subset $\mathcal{S}$ of $\mathrm{GL}(n,
\mathbb{F})$ such that $G= \langle \mathcal{S}\rangle$ is
nilpotent.

\vspace*{1mm}

Output: `true' if $G$ is finite; `false' otherwise.

\vspace*{1mm}

\begin{itemize}
\item[(I)] $\mathcal{S}^{p^\gamma}:= \{ S_i^{p^\gamma} \mid 1\leq
i \leq r \}$.

\vspace*{1mm}

\item[(II)] Return ${\tt
IsFiniteCRMatGroupFuncFF}(\mathcal{S}^{p^\gamma})$.
\end{itemize}

\bigskip

For nilpotent input, ${\tt IsFiniteNilpotentMatGroupFuncFF}$ is
superior to ${\tt IsFiniteMatGroup}$-${\tt FuncFF}$, because it
immediately reduces to the completely reducible case.

${\tt IsFiniteNilpotentMatGroupFuncFF}$ may be further refined.
Rather than computing a basis of an enveloping algebra in step
(II), it suffices to test whether $\varphi_\alpha$ has trivial
kernel on $G^{p^\gamma}$. A practical method to do this is given
at the end of \cite[Section 4.2]{Draft}. Likewise, computing
orders can be made more efficient for nilpotent input. A
specialized method to compute the order of a nilpotent subgroup of
$\mathrm{GL}(n,q)$ is implemented in Nilmat \cite{nilmat}, and may
be used in step (II) of ${\tt SizeFiniteMatGroupFuncFF}$.

\section{Implementation and performance}
\label{experiment}

Implementations of our algorithms are publicly available in {\sc
Magma}. In this section we report on their performance and
dependence on the main input parameters: the degree $n$, the
number of generators $r$, and size $q$ of the coefficient field.
We also investigated how runtimes vary with the degrees,
coefficients and number of summands of polynomials appearing in
matrix entries.

The experiments reported in Table~\ref{uniquetable} were
undertaken on a $3.0$ GHz machine with 4GB RAM running {\sc Magma}
V2.15-10.
\begin{table}[htb]
\begin{center}
\caption{}
 \label{uniquetable}
\begin{tabular}{|c|r|r|r|r|r|} \hline
\small{Group} & \small{$n$} & \small{$r$} & \small{$q$} & \small{Runtime.1} & \small{Runtime.2} \\
\hline \hline \small{$G_{11}$} & \small{$40$} & \small{$2$} &
 \small{$5^7$} &  \small{$1646$} & - \\ \hline
\small{$G_{12}$} & \small{$40$} & \small{$10$} &  \small{$5^7$} &
\small{$1124$} & - \\ \hline
\small{$G_{21}$} & \small{$54$} & \small{$20$} & \small{$29^{4}$} & \small{$806$} &  - \\
\hline
\small{$G_{22}$} & \small{$54$} & \small{$23$} & \small{$29^{4}$} & \small{$474$} & - \\
\hline
\small{$G_{31}$} & \small{$36$} & \small{$520$} & \small{$7^8$} &
\small{$2506$} &  \small{$113$}
\\ \hline
\small{$G_{32}$} & \small{$36$} & \small{$522$} & \small{$7^8$} &
\small{$252$} & \small{$20$}
\\ \hline
\small{$G_{41}$} & \small{$100$} & \small{$1$} & \small{$3^{12}$}
& \small{$423$} & \small{$16$}
\\ \hline
\small{$G_{42}$} & \small{$100$} & \small{$1$} & \small{$3^{12}$}
& \small{$8$} & \small{$4$}
\\ \hline
\end{tabular}
\end{center}
\end{table}

As tests, we chose groups with extremal properties, that pass
through all stages of each algorithm. The column `Runtime.1' in
Table~\ref{uniquetable} lists the CPU time in seconds of ${\tt
IsFiniteMatGroupFuncFF}$ for input $G_{ij}$. The column
`Runtime.2' lists the time for ${\tt
IsFiniteNilpotentMatGroupFuncFF}$ when $G_{ij}$ is nilpotent. Note
that the $G_{i1}$ are finite and the $G_{i2}$ are infinite for
$1\leq i \leq 4$.

Polynomials in the matrix entries of $G_{1j}$, $G_{2j}$ have
degrees up to $1000$, and many summands with large coefficients.
The $G_{1j}$ are absolutely irreducible: $G_{11}$ is a conjugate
of $\mathrm{GL}(40,5^7)$ in $\mathrm{GL}(40,\mathbb{F}_{5^7}(X))$,
whereas $G_{12}$ is generated by $G_{11}$ and infinite order
matrices in $\mathrm{SL}(40,\mathbb{F}_{5^7}(X))$. Testing each
group necessitates computing an algebra basis of maximal size
$40^2=1600$ in $\mathrm{Mat}(40,5^7)$. The performance of
${\tt IsFiniteMatGroupFuncFF}$ is essentially identical 
to that of ${\tt IsFiniteCRMatGroupFuncFF}$ for this input.

The $G_{2j}$ have non-trivial unipotent normal subgroups, and so
are not completely reducible. The group $G_{21}$ is the Kronecker
product of a conjugate of $\mathrm{GL}(6,29^{4})$ in
$\mathrm{GL}(6,\mathbb{F}_{29^{4}}(X))$ with a 10-generator
unipotent subgroup of $\mathrm{GL}(9,\mathbb{F}_{29^{4}}(X))$. The
group $G_{22}$ is generated by $G_{21}$ and infinite order
matrices of the form $g\otimes I_9$, where $g$ is an upper
triangular element of $\mathrm{SL}(6,\mathbb{F}_{29^{4}}(X))$.

The $G_{3j}$ are nilpotent and not completely reducible. The group
$G_{31}$ is the Kronecker product of a $3$-dimensional unipotent
group with a $12$-dimensional completely reducible nilpotent group
over $\mathbb{F}_{7^8}(X)$. Specifically, the latter group is a
conjugate of a $2\times 2$ block diagonal group, whose blocks are
a Sylow $3$-subgroup and a Sylow $5$-subgroup of
$\mathrm{SL}(6,7^8)$. The group $G_{32}$ is generated by $G_{31}$
and infinite order diagonal matrices of the form $g\otimes
I_{18}$, where $g\in \mathrm{SL}(2,\mathbb{F}_{7^{8}}(X))$.

The $G_{4j}$ are cyclic. The group $G_{41}$ is generated by $h_1
\otimes \, h$, where $h, h_1\in
\mathrm{GL}(10,\mathbb{F}_{3^{12}}(X))$, $h$ is unipotent, and
$h_1$ is a conjugate of a randomly chosen $3'$-element of
$\mathrm{GL}(10,3^{12})$. Also $G_{42}=$ $\langle h_2\otimes \,
h\rangle$ where $h_2$ is a lower triangular element of
$\mathrm{SL}(10,\mathbb{F}_{3^{12}}(X))$. Comparison of the last
two columns of Table~\ref{uniquetable} for $G_{3j}$ and $G_{4j}$
demonstrates the superiority of ${\tt IsFiniteNilpotentMat}$-${\tt
GroupFuncFF}$ for nilpotent input.

Performance of ${\tt SizeFiniteMatGroupFuncFF}$ depends on the
algorithm used to find the order of a matrix group
over a finite field. {\sc Magma} uses the (random) Schreier-Sims
algorithm \cite[Chapter 7]{HoltEickOBrien05}. In
Table~\ref{secondtable} we report on using ${\tt
SizeFiniteMatGroupFuncFF}$ to compute the orders of the following
groups over a univariate function field: $H_1$ is a conjugate of
the full monomial subgroup of $\mathrm{GL}(20,17)$, $H_2$ and
$H_3$ are nilpotent groups constructed in the same manner as
$G_{31}$ ($H_2$ but not $H_3$ is completely reducible), and $H_4$
is cyclic unipotent.

\begin{table}[htb]
\begin{center}
\caption{}
 \label{secondtable}
\begin{tabular}{|c|r|r|r|r|r|} \hline
\small{Group} & \small{$n$} & \small{$r$} &  \small{$q$} & \small{Order} & \small{Runtime}  \\
\hline \hline \small{$H_{1}$} & \small{$20$} & \small{$3$} &
 \small{$17$} & \small{$20!2^{80}$} & \small{$33$} \\ \hline
\small{$H_{2}$} & \small{$40$} & \small{$24$} & \small{$3^{10}$} &
\small{$5^{22}7^6$} & \small{$56$}
\\ \hline
\small{$H_{3}$} & \small{$24$} & \small{$16$} & \small{$7^2$} &
\small{$3^45^47^3$} & \small{$233$}
\\ \hline
\small{$H_{4}$} & \small{$40$} & \small{$1$} & \small{$5^{10}$} &
\small{$5^3$} & \small{$230$}
\\ \hline
\end{tabular}
\end{center}
\end{table}

\bibliographystyle{amsplain}

\end{document}